\documentclass[11pt]{article}%
\usepackage{amssymb}
\usepackage{amsmath}
\usepackage{amsfonts}
\usepackage{graphicx}%
\setcounter{MaxMatrixCols}{30}
\newtheorem{theorem}{Theorem}

\newtheorem{corollary}[theorem]{Corollary}

\newtheorem{definition}[theorem]{Definition}
\newtheorem{example}[theorem]{Example}

\newtheorem{proposition}[theorem]{Proposition}
\newtheorem{remark}[theorem]{Remark}

\newenvironment{proof}[1][Proof]{\noindent\textbf{#1.} }{\ \rule{0.5em}{0.5em}}
\topmargin -2cm
\textwidth 17.0cm
\textheight 24 cm
\parskip 0cm
\parindent 0cm
\oddsidemargin 0cm
\evensidemargin 0cm
\begin{document}

\title{Similar and Self-similar Curves in Minkowski n-space }
\author{Hakan Sim\c{s}ek
\and Mustafa \"{O}zdemir}
\maketitle

\begin{abstract}
In this paper, we investigate the similarity transformations in the
Minkowski-n space. We study the geometric invariants of non-null curves under
the similarity transformations. Besides, we extend the fundamental theorem for
a non-null curve according to a similarity motion of $\mathbb{E}_{1}^{n}$. We
determine all non-null self-similar curves in $\mathbb{E}_{1}^{n}$.

\qquad\ 

\textbf{Keywords : Lorentzian Similarity Geometry, Similarity Transformation,
Similarity invariants, similar curves, self-similar curves. }

\textbf{MSC 2010 : 53A35, 53A55, 53B30.}

\end{abstract}

\section{\textbf{Introduction}}

\qquad A similarity transformation (or similitude) of Euclidean space, which
consists of a rotation, a translation and an isotropic scaling, is an
automorphism preserving the angles and ratios between lengths. The geometric
properties unchanged by similarity transformations is called the
\textit{similarity geometry}. The whole Euclidean geometry can be considered
as a glass of similarity geometry. The similarity transformations are studying
in most area of the pure and applied mathematics.

\qquad Curve matching is an important research area in the computer vision and
pattern recognition, which can help us determine what category the given test
curve belongs to. Also, the recognition and pose determination of 3D objects
can be represented by space curves is important for industry automation,
robotics, navigation and medical applications. S. Li \cite{vision2} showed an
invariant representation based on so-called similarity-invariant coordinate
system (SICS) for matching 3D space curves under the group of similarity
transformations. He also \cite{vision3} presented a system for matching and
pose estimation of 3D space curves under the similarity transformation. Brook
et al. \cite{vision00} discussed various problems of image processing and
analysis by using the similarity transformation. Sahbi \cite{vision0}
investigated a method for shape description based on kernel principal
component analysis (KPCA) in the similarity invariance of KPCA. There are many
applications of the similarity transformation in the computer vision and
pattern recognition (see also \cite{vision01, vision1}).

\qquad The idea of self-similarity is one of the most basic and fruitful ideas
in mathematics. A self-similar object is exactly similar to a part of itself,
which in turn remains similar to a smaller part of itself, and so on. In the
last few decades it established itself as the central notion in areas such as
fractal geometry, dynamical systems, computer networks and statistical
physics. Recently, the self-similarity started playing a role in algebra as
well, first of all in group theory (\cite{fractals, fractal4}). Mandelbrot
presented the first description of self-similar sets, namely sets that may be
expressed as unions of rescaled copies of themselves. He called these sets
fractals, which are systems that present such self-similar behavior and the
examples in nature are many. The Cantor set, the von Koch snowflake curve and
the Sierpinski gasket are some of the most famous examples of such sets.
Hutchinson and, shortly thereafter, Barnsley and Demko showed how systems of
contractive maps with associated probabilities, referred to as Iterated
Function Systems (IFS), can be used to construct fractal, self-similar sets
and measures supported on such sets (see \cite{fractal, fractal1, fractal2,
fractal3, fractal30}).

\qquad When the n-dimensional Euclidean space $\mathbb{E}^{n}$ is endowed with
the Lorentzian inner product, we obtain the \textit{Lorentzian similarity
geometry. }The Lorentzian flat geometry\textit{ }is inside the\textit{
}Lorentzian similarity geometr\textit{y. }Aristide \cite{simlorentz0}
investigated the closed\textit{ }Lorentzian similarity manifolds.\textit{\ }%
Kamishima \cite{simlorentz}\textit{ }studied the properties of compact
Lorentzian similarity manifolds using developing maps and holonomy
representations. The geometric invariants of curves in the Lorentzian
similarity geometry have not been considered so far. The theme of similarity
and self-similarity will be interesting in the Lorentzian-Minkowski space.

\qquad Many integrable equations, like Korteweg-de Vries (mKdV), sine-Gordon
and nonlinear Schr\"{o}dinger (NLS) equations, in soliton theory have been
shown to be related to motions of inextensible curves in the Euclidean space.
By using the similarity invariants of curves under the similarity motion, KS.
Chou and C. Qu \cite{chaos} showed that the motions of curves in two-, three-
and n-dimensional $(n>3)$ similarity geometries correspond to the Burgers
hierarchy, Burgers-mKdV hierarchy and a multi-component generalization of
these hierarchies in $\mathbb{E}^{n}$. Moreover, to study the motion of curves
in the Minkowski space also attracted researchers' interest. G\"{u}rses
\cite{gurses} studied the motion of curves on two-dimensional surface in
Minkowski 3-space. Q. Ding and J. Inoguchi \cite{inoguchi} showed that
binormal motions of curves in Minkowski 3-space are equivalent to some
integrable equations. In the 4-dimensional Minkowski space Nakayama
\cite{nakayama} presented a formulation on the motion of curves in
hyperboloids, which includes many equations integrable by means of the 1+1-
dimensional AKNS inverse scattering scheme. Therefore, the current paper can
contribute to study the motion of curves with similarity invariants in
$\mathbb{E}_{1}^{n}$.

\qquad Berger \cite{berger} represented the broad content of similarity
transformations in the arbitrary dimensional Euclidean spaces. Encheva
and\textsc{ }Georgiev \cite{shape, similar} studied the differential geometric
invariants of curves according to a similarity in the finite dimensional
Euclidean spaces. In the current paper, Lorentzian version of similarity
transformations will be entitled by pseudo-similarity transformation defined
by $\left(  \ref{05}\right)  $ in the section 2. The main idea of this paper
is to extend the fundamental theorem for a non-null curve with respect to
p-similarity motion and determine non-null self-similar curves in the
Minkowski n-space $\mathbb{E}_{1}^{n}$.

\qquad The scope of paper is as follows. First, we prove that the p-similarity
transformations preserve the causal characters and angles. We introduce
differential geometric invariants of a non-lightlike Frenet curve which are
called p-shape curvatures according to the group of p-similarity
transformations in $\mathbb{E}_{1}^{n}$. We give the uniqueness theorem which
states that two non-null curves having same the p-shape curvatures are
equivalent according to a p-similarity. Furthermore, we obtain the existence
theorem that is a process for constructing a non-null curve by its p-shape
curvatures under some initial conditions. Lastly, we study a exact description
of all non-null self-similar curves in $\mathbb{E}_{1}^{n}$. Especially, we
examine the low-dimensional cases $n=2,3,4$ in more details.

\section{Fundamental Group of the Lorentzian Similarity Geometry}

\qquad Firstly let us give some basic notions of the Lorentzian geometry. Let
$\mathbf{x}=\left(  x_{1},\cdots,x_{n}\right)  ^{T},$ $\mathbf{y}=\left(
y_{1},\cdots,y_{n}\right)  ^{T}$ and $\mathbf{z}=\left(  z_{1},\cdots
,z_{n}\right)  ^{T}$ be three arbitrary vectors in the Minkowski space
$\mathbb{E}_{1}^{n}.$ The Lorentzian inner product of $\mathbf{x}$ and
$\mathbf{y}$ can be stated as $\mathbf{x}\cdot\mathbf{y}=\mathbf{x}^{T}%
I^{\ast}\mathbf{y}$ where $I^{\ast}=diag(-1,1,1,\cdots,1).$ Then, the norm of
the vector $\mathbf{x}$ is represented by $\left\Vert \mathbf{x}\right\Vert
=\sqrt{\left\vert \mathbf{x}\cdot\mathbf{x}\right\vert },$ \cite{semi riemann,
grub}$.$

\begin{theorem}
\label{min}Let $\mathbf{x}$ and $\mathbf{y}$ be vectors in the Minkowski
n-space $\mathbb{E}_{1}^{n}$.

$\left(  i\right)  $ If $\mathbf{x}$ and $\mathbf{y}$ are timelike vectors
which are in the same timecone of $\mathbb{E}_{1}^{n}$ , then there is a
unique number $\theta\geq0,$ called the hyperbolic angle between $\mathbf{x}$
and $\mathbf{y}$ such that $\mathbf{x\cdot y}=-\left\Vert \mathbf{x}%
\right\Vert \left\Vert \mathbf{y}\right\Vert \cosh\theta$ $\mathbf{.}$

$\left(  ii\right)  $ If $\mathbf{x}$ and $\mathbf{y}$ are spacelike vectors
satisfying the inequality $\left\vert \mathbf{x\cdot y}\right\vert <\left\Vert
\mathbf{x}\right\Vert \left\Vert \mathbf{y}\right\Vert ,$ then $\mathbf{x\cdot
y}=\left\Vert \mathbf{x}\right\Vert \left\Vert \mathbf{y}\right\Vert
\cos\theta$ where $\theta$ is the angle between $\mathbf{x}$ and $\mathbf{y.}$

$\left(  iii\right)  $ If $\mathbf{x}$ and $\mathbf{y}$ are spacelike vectors
satisfying the inequality $\left\vert \mathbf{x\cdot y}\right\vert >\left\Vert
\mathbf{x}\right\Vert \left\Vert \mathbf{y}\right\Vert ,$ then $\mathbf{x\cdot
y}=\left\Vert \mathbf{x}\right\Vert \left\Vert \mathbf{y}\right\Vert
\cosh\theta$ where $\theta$ is the hyperbolic angle between $\mathbf{x}$ and
$\mathbf{y.}$
\end{theorem}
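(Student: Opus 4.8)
The plan is to reduce all three statements to a single analytic fact — the reversed Cauchy--Schwarz inequality for timelike vectors — together with the elementary monotonicity of $\cos$ on $[0,\pi]$ and of $\cosh$ on $[0,\infty)$. Parts $(ii)$ and $(iii)$ require essentially no work once their hypotheses are granted. In case $(ii)$ the number $\mathbf{x}\cdot\mathbf{y}/(\left\Vert \mathbf{x}\right\Vert \left\Vert \mathbf{y}\right\Vert )$ lies in the open interval $(-1,1)$, so, since $\cos$ is a decreasing bijection from $[0,\pi]$ onto $[-1,1]$, there is a unique $\theta\in(0,\pi)$ whose cosine equals it. In case $(iii)$ the same ratio has absolute value strictly larger than $1$, and, after reading off the sign of $\mathbf{x}\cdot\mathbf{y}$, the fact that $\cosh$ is an increasing bijection from $[0,\infty)$ onto $[1,\infty)$ produces the unique hyperbolic angle $\theta$. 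Thus the genuine content is concentrated in part $(i)$.

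For $(i)$, I would first decompose $\mathbf{y}$ relative to $\mathbf{x}$: since $\mathbf{x}$ is timelike we have $\mathbf{x}\cdot\mathbf{x}\neq0$, so $\mathbb{E}_{1}^{n}=\mathbb{R}\mathbf{x}\oplus\mathbf{x}^{\perp}$ and we may write $\mathbf{y}=a\mathbf{x}+\mathbf{w}$ with $a=(\mathbf{x}\cdot\mathbf{y})/(\mathbf{x}\cdot\mathbf{x})$ and $\mathbf{w}\cdot\mathbf{x}=0$. Because the index of $\mathbb{E}_{1}^{n}$ is $1$, the Lorentzian product restricts to a positive-definite form on $\mathbf{x}^{\perp}$ (see the cited texts), hence $\mathbf{w}\cdot\mathbf{w}\geq0$, with equality exactly when $\mathbf{w}=\mathbf{0}$. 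A short computation gives $\mathbf{x}\cdot\mathbf{y}=a(\mathbf{x}\cdot\mathbf{x})$ and $\mathbf{y}\cdot\mathbf{y}=a^{2}(\mathbf{x}\cdot\mathbf{x})+\mathbf{w}\cdot\mathbf{w}$. Using that $\mathbf{y}$ is timelike (so $\mathbf{y}\cdot\mathbf{y}<0$), that $\mathbf{x}\cdot\mathbf{x}<0$, and that $\mathbf{w}\cdot\mathbf{w}\geq0$, one gets $\left\Vert \mathbf{x}\right\Vert ^{2}\left\Vert \mathbf{y}\right\Vert ^{2}=(\mathbf{x}\cdot\mathbf{x})(\mathbf{y}\cdot\mathbf{y})=a^{2}(\mathbf{x}\cdot\mathbf{x})^{2}-(\mathbf{x}\cdot\mathbf{x})(\mathbf{w}\cdot\mathbf{w})\leq a^{2}(\mathbf{x}\cdot\mathbf{x})^{2}=(\mathbf{x}\cdot\mathbf{y})^{2}$, i.e. the reversed Cauchy--Schwarz inequality $\left\vert \mathbf{x}\cdot\mathbf{y}\right\vert \geq\left\Vert \mathbf{x}\right\Vert \left\Vert \mathbf{y}\right\Vert $, with equality iff $\mathbf{y}$ is a multiple of $\mathbf{x}$.

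To pin down the sign I would invoke the hypothesis that $\mathbf{x}$ and $\mathbf{y}$ lie in the same timecone, which means precisely that the coefficient $a$ above is positive, equivalently $\mathbf{x}\cdot\mathbf{y}<0$; this is the only place the hypothesis is used essentially. Then $-\mathbf{x}\cdot\mathbf{y}=a\left\Vert \mathbf{x}\right\Vert ^{2}>0$ and $-\mathbf{x}\cdot\mathbf{y}\geq\left\Vert \mathbf{x}\right\Vert \left\Vert \mathbf{y}\right\Vert $, so $(-\mathbf{x}\cdot\mathbf{y})/(\left\Vert \mathbf{x}\right\Vert \left\Vert \mathbf{y}\right\Vert )\geq1$, and by the monotone bijectivity of $\cosh$ on $[0,\infty)$ there is exactly one $\theta\geq0$ with $\cosh\theta=(-\mathbf{x}\cdot\mathbf{y})/(\left\Vert \mathbf{x}\right\Vert \left\Vert \mathbf{y}\right\Vert )$, that is, $\mathbf{x}\cdot\mathbf{y}=-\left\Vert \mathbf{x}\right\Vert \left\Vert \mathbf{y}\right\Vert \cosh\theta$. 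The main obstacle is exactly the reversed Cauchy--Schwarz step: the rest is bookkeeping with monotone functions, but that step genuinely uses the Lorentzian signature through the positive-definiteness of $\mathbf{x}^{\perp}$, and one must take care that the orthogonal splitting is legitimate (it is, since $\mathbf{x}\cdot\mathbf{x}\neq0$) and that the timecone assumption is what fixes the sign rather than being invoked cosmetically.
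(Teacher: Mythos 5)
Your argument is correct, but there is nothing in the paper to measure it against: Theorem~1 is stated without proof and is simply quoted from the cited sources (O'Neill, Greub). What you have written is precisely the standard textbook proof of that quoted result --- the orthogonal splitting $\mathbb{E}_{1}^{n}=\mathbb{R}\mathbf{x}\oplus\mathbf{x}^{\perp}$ for timelike $\mathbf{x}$, positive definiteness of the induced form on $\mathbf{x}^{\perp}$, the reversed Cauchy--Schwarz inequality with its equality case, and the identification of ``same timecone'' with $\mathbf{x}\cdot\mathbf{y}<0$ --- so it is essentially the argument found in O'Neill, and all the steps check out. One remark on part $(iii)$: as literally printed, the conclusion $\mathbf{x}\cdot\mathbf{y}=\left\Vert \mathbf{x}\right\Vert \left\Vert \mathbf{y}\right\Vert \cosh\theta$ forces $\mathbf{x}\cdot\mathbf{y}>0$, which does not follow from the hypothesis $\left\vert \mathbf{x}\cdot\mathbf{y}\right\vert >\left\Vert \mathbf{x}\right\Vert \left\Vert \mathbf{y}\right\Vert$ alone (replace $\mathbf{y}$ by $-\mathbf{y}$); the usual formulation has $\left\vert \mathbf{x}\cdot\mathbf{y}\right\vert$ on the left-hand side. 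Your phrase ``after reading off the sign of $\mathbf{x}\cdot\mathbf{y}$'' is where this wrinkle is being absorbed; it is a defect of the statement as printed rather than of your proof, but it would be worth saying explicitly that $(iii)$ only determines $\theta$ from $\left\vert \mathbf{x}\cdot\mathbf{y}\right\vert$.
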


\qquad Now, we define similarity transformation in $\mathbb{E}_{1}^{n}$.
A\emph{ pseudo-similarity (p-similarity)} of Minkowski n-space $\mathbb{E}%
_{1}^{n}$ is a composition of a dilatation (homothety) and a Lorentzian
motion. Any p-similarity $f:\mathbb{E}_{1}^{n}\rightarrow\mathbb{E}_{1}^{n}$
is determined by%

\begin{equation}
f\left(  x\right)  =\mu\mathbf{A}x+\mathbf{b}, \label{05}%
\end{equation}
where $\mu$ is a real constant, $\mathbf{A}$ is a fixed pseudo-orthogonal
$n\times n$ matrix with $\det(\mathbf{A})=1$ and $\mathbf{b}=\left(
b_{1},...,b_{n}\right)  ^{T}\in%
\mathbb{R}
_{1}^{n}$ is a translation vector. When $n$ is odd and $\mu$ is a positive
real constant or $n$ is even and $\mu$ is a non-zero real constant, $f$ is an
orientation-preserving similarity transformation. When $n$ is odd and $\mu$ is
a negative real constant, $f$ is an orientation-reversing p-similarity
transformation. Since $f$ is a affine transformation, we get $\vec{f}\left(
\mathbf{u}\right)  =\mu\mathbf{Au}$ and $\left\Vert \vec{f}\left(
\mathbf{u}\right)  \right\Vert =\left\vert \mu\right\vert \left\Vert
\mathbf{u}\right\Vert $ for any $\mathbf{u}\in%
\mathbb{R}
_{1}^{n}$ where $\vec{f}\left(  \overrightarrow{xy}\right)  =\overrightarrow
{f(x)f(y)}$ (see \cite{berger}). The constant $\left\vert \mu\right\vert $ is
called p-similarity ratio of the transformation $f$. The p-similarity
transformations are a group under the composition of maps and we denote by
\textbf{Sim}$\left(  \mathbb{E}_{1}^{n}\right)  $. This group is a fundamental
group of the Lorentzian similarity geometry. Also, the group of
orientation-preserving (reversing) p-similarities are denoted by
\textbf{Sim}$^{+}\left(  \mathbb{E}_{1}^{n}\right)  $ (\textbf{Sim}%
$^{-}\left(  \mathbb{E}_{1}^{n}\right)  ,$ resp.).

\begin{theorem}
The p-similarity transformations preserve the causal characters and angles.
\end{theorem}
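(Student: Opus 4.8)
The goal is to show that any $p$-similarity $f(x) = \mu\mathbf{A}x + \mathbf{b}$ preserves causal character (the sign of $\mathbf{u}\cdot\mathbf{u}$ for tangent/difference vectors) and preserves angles in the sense of Theorem \ref{min}.

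First I would reduce everything to the linear part $\vec{f}(\mathbf{u}) = \mu\mathbf{A}\mathbf{u}$, since causal character and angles are defined via the Lorentzian inner product of difference vectors $\overrightarrow{xy}$, and $\vec{f}(\overrightarrow{xy}) = \overrightarrow{f(x)f(y)}$ kills the translation $\mathbf{b}$. The key computation is that for any $\mathbf{u},\mathbf{v}\in\mathbb{R}_1^n$,
\[
\vec{f}(\mathbf{u})\cdot\vec{f}(\mathbf{v}) = (\mu\mathbf{A}\mathbf{u})^T I^\ast (\mu\mathbf{A}\mathbf{v}) = \mu^2\,\mathbf{u}^T(\mathbf{A}^T I^\ast \mathbf{A})\mathbf{v} = \mu^2\,\mathbf{u}^T I^\ast \mathbf{v} = \mu^2\,(\mathbf{u}\cdot\mathbf{v}),
\]
where I use that $\mathbf{A}$ pseudo-orthogonal means exactly $\mathbf{A}^T I^\ast \mathbf{A} = I^\ast$. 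In particular $\vec{f}(\mathbf{u})\cdot\vec{f}(\mathbf{u}) = \mu^2(\mathbf{u}\cdot\mathbf{u})$, and since $\mu^2 > 0$ this has the same sign as $\mathbf{u}\cdot\mathbf{u}$; hence timelike, spacelike, and lightlike vectors are sent to timelike, spacelike, and lightlike vectors respectively. This also gives $\|\vec{f}(\mathbf{u})\| = |\mu|\,\|\mathbf{u}\|$, consistent with what the excerpt already records.

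Next, for angles I would go through the three cases of Theorem \ref{min}. In each case the defining relation has the shape $\mathbf{u}\cdot\mathbf{v} = \pm\|\mathbf{u}\|\,\|\mathbf{v}\|\,(\cosh\theta \text{ or } \cos\theta)$. Applying $\vec{f}$ and using the two scaling identities above, the left side picks up a factor $\mu^2$ and the right side picks up $|\mu|\cdot|\mu| = \mu^2$ as well, so the factor cancels and the angle $\theta$ associated to $\vec{f}(\mathbf{u}),\vec{f}(\mathbf{v})$ equals the angle associated to $\mathbf{u},\mathbf{v}$. One should note in case (i) that $\vec{f}$ preserves timecones when $\mu > 0$; when $\mu < 0$ it reverses them, but then one compares $-\vec{f}(\mathbf{u})$ with $-\vec{f}(\mathbf{v})$, which lie in the same cone, and the hyperbolic angle is unchanged — so "angle between" is well-defined either way. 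The strict inequalities $|\mathbf{u}\cdot\mathbf{v}| \lessgtr \|\mathbf{u}\|\|\mathbf{v}\|$ distinguishing cases (ii) and (iii) are likewise preserved because both sides scale by $\mu^2$.

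The proof is essentially a one-line matrix identity plus bookkeeping, so there is no serious obstacle; the only point requiring a little care is the timecone issue for $\mu < 0$ in case (i), which I would handle with the remark above. I would write the computation of $\vec{f}(\mathbf{u})\cdot\vec{f}(\mathbf{v}) = \mu^2(\mathbf{u}\cdot\mathbf{v})$ as a displayed equation, then state the causal-character conclusion, then dispatch the three angle cases in a single short paragraph.
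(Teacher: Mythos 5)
Your proposal is correct and follows essentially the same route as the paper: the key identity $\vec{f}(\mathbf{u})\cdot\vec{f}(\mathbf{v})=\mu^{2}\left(  \mathbf{u\cdot v}\right)$ from pseudo-orthogonality of $\mathbf{A}$, sign preservation since $\mu^{2}>0$, and then cancellation of $\mu^{2}$ in each of the three cases of Theorem \ref{min}. Your explicit remark about timecones being reversed when $\mu<0$ is a point the paper's proof passes over silently, but it does not change the argument.
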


\begin{proof}
Let $f$ be a p-similarity. Then, since we can write the equation
\begin{equation}
\vec{f}(\mathbf{u})\cdot\vec{f}(\mathbf{u})=\mu^{2}\left(  \mathbf{Au\cdot
Au}\right)  =\mu^{2}\left(  \mathbf{u\cdot u}\right)  , \label{06}%
\end{equation}
$f$ preserves the causal character in $\mathbb{E}_{1}^{n}.$

\qquad Let $\mathbf{u}$ and $\mathbf{v}$ are timelike vectors which are in the
same timecone of $\mathbb{E}_{1}^{n}.$ We consider $\theta$ and $\gamma$ as
the angles between $\mathbf{u}$, $\mathbf{v}$ and $\vec{f}(\mathbf{u)}$,
$\vec{f}(\mathbf{v),}$ respectively. Since $\vec{f}(\mathbf{u)}$ and $\vec
{f}(\mathbf{v)}$ have same causal characters with $\mathbf{u}$ and
$\mathbf{v,}$ we can find the following equation from the Theorem $\ref{min};$%
\begin{align}
\vec{f}(\mathbf{u})\cdot\vec{f}(\mathbf{v})  &  =-\left\Vert \vec
{f}(\mathbf{u})\right\Vert \left\Vert \vec{f}(\mathbf{v})\right\Vert
\cosh\gamma\label{011}\\
\mu^{2}\left(  \mathbf{u\cdot v}\right)   &  =-\mu^{2}\left\Vert
\mathbf{u}\right\Vert \left\Vert \mathbf{v}\right\Vert \cosh\gamma\nonumber\\
-\left\Vert \mathbf{u}\right\Vert \left\Vert \mathbf{v}\right\Vert \cosh\theta
&  =-\left\Vert \mathbf{u}\right\Vert \left\Vert \mathbf{v}\right\Vert
\cosh\gamma\nonumber\\
\cosh\theta &  =\cosh\gamma.\nonumber
\end{align}
From here, we have $\theta=\gamma.$ If $\mathbf{u}$ and $\mathbf{v}$ are
spacelike vectors satisfying the inequality $\left\vert \mathbf{u\cdot
v}\right\vert <\left\Vert \mathbf{u}\right\Vert \left\Vert \mathbf{v}%
\right\Vert ,$ then
\[
\left\Vert \vec{f}(\mathbf{u})\right\Vert \left\Vert \vec{f}(\mathbf{v}%
)\right\Vert =\mu^{2}\left\Vert \mathbf{u}\right\Vert \left\Vert
\mathbf{v}\right\Vert >\mu^{2}\left\vert \mathbf{u\cdot v}\right\vert
=\left\vert \vec{f}(\mathbf{u})\cdot\vec{f}(\mathbf{v})\right\vert .
\]
Therefore, it can be said from the Theorem $\ref{min}$ that we have
$\theta=\gamma$ similar to $\left(  \ref{011}\right)  .$

\qquad It can also be found that $\theta$ is equal to $\gamma$ in case of
condition $\left(  iii\right)  $ in the Theorem $\ref{min}.$ As a consequence,
every p-similarity transformation preserves the angle between any two vectors.
\end{proof}

\section{Geometric Invariants of Non-null Curves in Lorentzian Similarity
Geometry}

\qquad Let $\alpha:t\in J\rightarrow\alpha\left(  t\right)  \in\mathbb{E}%
_{1}^{n}$ be a non-null curve of class $C^{n}$ and $\left\{  \mathbf{e}%
_{1},\cdots,\mathbf{e}_{n}\right\}  $ be a Frenet moving n-frame of $\alpha$
where $J\subset%
\mathbb{R}
$ is an open interval. We denote image of $\alpha$ under $f\in$ \textbf{Sim}%
$\left(  \mathbb{E}_{1}^{n}\right)  $ by $\alpha^{\ast}$ i.e. $\alpha^{\ast
}=f\circ\alpha.$ Then, $\alpha^{\ast}$ can be stated as
\begin{equation}
\alpha^{\ast}\left(  t\right)  =\mu\mathbf{A}\alpha\left(  t\right)  +b,\text{
\ \ \ \ \ \ }t\in I. \label{1}%
\end{equation}
The arc length functions of $\alpha$ and $\alpha^{\ast}$ starting at $t_{0}\in
J$ are%
\begin{equation}
s(t)=\int\limits_{t_{0}}^{t}\left\Vert \frac{d\alpha\left(  u\right)  }%
{du}\right\Vert du,\text{ \ \ \ \ \ \ }s^{\ast}\left(  t\right)
=\int\limits_{t_{0}}^{t}\left\Vert \frac{d\alpha^{\ast}\left(  u\right)  }%
{du}\right\Vert du=\left\vert \mu\right\vert s\left(  t\right)  . \label{1'}%
\end{equation}
In this section, we denote by a prime \textquotedblright$^{\prime}%
$\textquotedblright\ the differentiation with respect to $s$. The
i$^{\text{th}}$ curvature $\kappa_{i}$ of non-null curve $\alpha$ is given by
\begin{equation}
\kappa_{i}=\mathbf{e}_{i}^{\prime}\cdot\mathbf{e}_{i+1} \label{2}%
\end{equation}
for $1\leq i\leq n-1$ where $\kappa_{j}>0$ and $\kappa_{n-1}\neq0$,
$j=1,2,\cdots,n-2$. The Frenet-Serret equations of $\alpha$ in $\mathbb{E}%
_{1}^{n}$ is
\begin{align}
\mathbf{e}_{1}^{\prime}  &  =\varepsilon_{2}\kappa_{1}\mathbf{e}%
_{2}\nonumber\\
\mathbf{e}_{i}^{\prime}  &  =-\varepsilon_{i-1}\kappa_{i-1}\mathbf{e}%
_{i-1}+\varepsilon_{i+1}\kappa_{i}\mathbf{e}_{i+1},\text{ \ \ \ \ \ \ for
}2\leq i\leq n-1\label{f}\\
\mathbf{e}_{n}^{\prime}  &  =-\varepsilon_{n-1}\kappa_{n-1}\mathbf{e}%
_{n-1}\nonumber
\end{align}
where
\[
\varepsilon_{\ell}=\left\{
\begin{array}
[c]{lll}%
-1 & \text{if }\mathbf{e}_{\ell}\text{ is the timelike vector} & \\
\text{ \ }1 & \text{if }\mathbf{e}_{\ell}\text{ is the spacelike vector} &
\end{array}
\right.  \ \ \ \ \text{for }1\leq\ell\leq n.
\]

\qquad Let $\gamma\left(  s\right)  =\mathbf{e}_{1}\left(  s\right)  $ be the
spherical tangent indicatrix of $\alpha$ and $\sigma$ be an arc length
parameter of $\gamma$. it can be given a reparametrization of $\alpha$ by
$\sigma$
\begin{equation}
\alpha=\alpha\left(  \sigma\right)  :I\rightarrow\mathbb{E}_{1}^{n}, \label{3}%
\end{equation}
and the parameter $\sigma$ is called a spherical arc length parameter of
$\alpha.$ It is easily computed that
\begin{equation}
d\sigma=\kappa_{1}ds\text{ \ \ and \ \ }\frac{d\alpha}{d\sigma}=\frac
{1}{\kappa_{1}}\mathbf{e}_{1}. \label{4}%
\end{equation}
Thus, we can write
\begin{equation}
\frac{d}{d\sigma}%
\begin{bmatrix}
\mathbf{e}_{1}\\
\mathbf{e}_{2}\\
\mathbf{e}_{3}\\
\vdots\\
\mathbf{e}_{n-1}\\
\mathbf{e}_{n}%
\end{bmatrix}
=%
\begin{bmatrix}
0 & \varepsilon_{2} & 0 & \cdots & 0 & 0\\
-\varepsilon_{1} & 0 & \varepsilon_{3}\dfrac{\kappa_{2}}{\kappa_{1}} & \cdots
& 0 & 0\\
0 & -\varepsilon_{2}\dfrac{\kappa_{2}}{\kappa_{1}} & 0 & \cdots & 0 & 0\\
\vdots & \vdots & \ddots & \ddots & \vdots & \vdots\\
0 & 0 & 0 & \cdots & 0 & \varepsilon_{n}\dfrac{\kappa_{n-1}}{\kappa_{1}}\\
0 & 0 & 0 & \cdots & -\varepsilon_{n-1}\dfrac{\kappa_{n-1}}{\kappa_{1}} & 0
\end{bmatrix}%
\begin{bmatrix}
\mathbf{e}_{1}\\
\mathbf{e}_{2}\\
\mathbf{e}_{3}\\
\vdots\\
\mathbf{e}_{n-1}\\
\mathbf{e}_{n}%
\end{bmatrix}
. \label{5}%
\end{equation}

\qquad We consider the pseudo-orthogonal n-frame $\left\{  \frac{1}{\kappa
_{1}}\mathbf{e}_{1}\left(  \sigma\right)  ,\frac{1}{\kappa_{1}}\mathbf{e}%
_{2}\left(  \sigma\right)  ,\cdots,\frac{1}{\kappa_{1}}\mathbf{e}_{n}\left(
\sigma\right)  \right\}  ,$ $\sigma\in I,$ for the curve given by $\left(
\ref{3}\right)  .$ Let $\tilde{\kappa}_{1}$ denote the function $-\frac
{1}{\kappa_{1}}\frac{d\kappa_{1}}{d\sigma}$ and $\tilde{\kappa}_{i}$ denote
the function $\frac{\kappa_{i}}{\kappa_{1}}$ for $i=2,3,\cdots,n-1.$ Then,
using $\left(  \ref{f}\right)  ,$ $\left(  \ref{4}\right)  $ and $\left(
\ref{5}\right)  ,$ we get
\begin{equation}
\frac{d}{d\sigma}\left(  \frac{1}{\kappa_{1}}\mathbf{e}_{1}\left(
\sigma\right)  ,\frac{1}{\kappa_{1}}\mathbf{e}_{2}\left(  \sigma\right)
,\cdots,\frac{1}{\kappa_{1}}\mathbf{e}_{n}\left(  \sigma\right)  \right)
^{T}=\tilde{K}\left(  \frac{1}{\kappa_{1}}\mathbf{e}_{1}\left(  \sigma\right)
,\frac{1}{\kappa_{1}}\mathbf{e}_{2}\left(  \sigma\right)  ,\cdots,\frac
{1}{\kappa_{1}}\mathbf{e}_{n}\left(  \sigma\right)  \right)  ^{T} \label{f1}%
\end{equation}
where $\tilde{K}=%
\begin{bmatrix}
\tilde{\kappa}_{1} & \varepsilon_{2} & 0 & 0 & \cdots & 0 & 0 & 0\\
-\varepsilon_{1} & \tilde{\kappa}_{1} & \varepsilon_{3}\tilde{\kappa}_{2} &
0 & \cdots & 0 & 0 & 0\\
0 & -\varepsilon_{2}\tilde{\kappa}_{2} & \tilde{\kappa}_{1} & \varepsilon
_{4}\tilde{\kappa}_{3} & \cdots & 0 & 0 & 0\\
\vdots & \vdots & \ddots & \ddots & \ddots & \vdots & \vdots & \vdots\\
0 & 0 & 0 & 0 & \cdots & -\varepsilon_{n-2}\tilde{\kappa}_{n-2} &
\tilde{\kappa}_{1} & \varepsilon_{n}\tilde{\kappa}_{n-1}\\
0 & 0 & 0 & 0 & \cdots & 0 & -\varepsilon_{n-1}\tilde{\kappa}_{n-1} &
\tilde{\kappa}_{1}%
\end{bmatrix}
.$

\begin{proposition}
The pseudo-orthogonal frame
\begin{equation}
\frac{1}{\kappa_{1}}\mathbf{e}_{1}\left(  \sigma\right)  ,\frac{1}{\kappa_{1}%
}\mathbf{e}_{2}\left(  \sigma\right)  ,\cdots,\frac{1}{\kappa_{1}}%
\mathbf{e}_{n}\left(  \sigma\right)  ,\text{ \ \ \ \ \ \ \ \ }\sigma\in I
\label{8}%
\end{equation}
and the functions $\tilde{\kappa}_{1}=-\frac{1}{\kappa_{1}}\frac{d\kappa_{1}%
}{d\sigma},$ $\tilde{\kappa}_{i}=\frac{\kappa_{i}}{\kappa_{1}}$ $\left(
i=2,3,\cdots,n-1\right)  $ are invariant under the group of \textbf{Sim}%
$^{+}\left(  \mathbb{E}_{1}^{n}\right)  $ of the Minkowski n-space for the
non-null Frenet curve given by $\left(  \ref{3}\right)  $.
\end{proposition}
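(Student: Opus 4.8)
The plan is to verify invariance directly by applying a p-similarity $f(x)=\mu\mathbf{A}x+\mathbf{b}$ to the curve $\alpha$ and tracking how each object in the statement transforms. First I would record the basic transformation laws: from $\alpha^{\ast}=\mu\mathbf{A}\alpha+\mathbf{b}$ and $s^{\ast}=|\mu|s$ (equation $\left(\ref{1'}\right)$), differentiation with respect to arc length gives $\mathbf{e}_{1}^{\ast}=\pm\mathbf{A}\mathbf{e}_{1}$, and since $f\in\textbf{Sim}^{+}$ we may take the orientation so that $\mathbf{e}_{1}^{\ast}=\mathbf{A}\mathbf{e}_{1}$. A straightforward induction up the Frenet frame, using that $\mathbf{A}$ is pseudo-orthogonal (hence preserves the Lorentzian inner product and all the causal signs $\varepsilon_{\ell}$), then yields $\mathbf{e}_{i}^{\ast}=\mathbf{A}\mathbf{e}_{i}$ for all $i$, together with the curvature scaling $\kappa_{i}^{\ast}=\frac{1}{|\mu|}\kappa_{i}$ for $1\leq i\leq n-1$. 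This last relation is the crux of the whole argument, so I would isolate it as the first key step.

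Next I would pass to the spherical arc length parameter. From $d\sigma=\kappa_{1}ds$ and the scalings just obtained, $d\sigma^{\ast}=\kappa_{1}^{\ast}ds^{\ast}=\frac{1}{|\mu|}\kappa_{1}\cdot|\mu|\,ds=\kappa_{1}\,ds=d\sigma$, so the spherical arc length is literally unchanged (up to the additive constant fixed by the base point) — this is the conceptual reason the construction works. Then the frame $\left(\ref{8}\right)$ transforms as
\[
\frac{1}{\kappa_{1}^{\ast}}\mathbf{e}_{i}^{\ast}=\frac{|\mu|}{\kappa_{1}}\mathbf{A}\mathbf{e}_{i},
\]
and I would need to argue that the relevant notion of "invariant" is invariance as a pseudo-orthogonal frame field along the curve, i.e. the transformed frame is obtained from the old one by the fixed ambient isometry $\mathbf{A}$ (the constant $|\mu|$ being an overall rescaling that is the price of moving to an unnormalized frame; if one prefers, one normalizes and the $|\mu|$ disappears). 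The honest statement being proved is that the frame field $\sigma\mapsto\frac{1}{\kappa_{1}}\mathbf{e}_{i}(\sigma)$ and the functions $\tilde\kappa_{i}(\sigma)$ are carried to those of $\alpha^{\ast}$ by the identity reparametrization in $\sigma$, modulo the ambient transformation $\vec f$.

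Finally, for the functions: $\tilde\kappa_{i}^{\ast}=\frac{\kappa_{i}^{\ast}}{\kappa_{1}^{\ast}}=\frac{\kappa_{i}}{\kappa_{1}}=\tilde\kappa_{i}$ for $i=2,\dots,n-1$ since the common factor $\frac{1}{|\mu|}$ cancels, and $\tilde\kappa_{1}^{\ast}=-\frac{1}{\kappa_{1}^{\ast}}\frac{d\kappa_{1}^{\ast}}{d\sigma^{\ast}}=-\frac{|\mu|}{\kappa_{1}}\frac{d}{d\sigma}\!\left(\frac{1}{|\mu|}\kappa_{1}\right)=-\frac{1}{\kappa_{1}}\frac{d\kappa_{1}}{d\sigma}=\tilde\kappa_{1}$, using $d\sigma^{\ast}=d\sigma$ and that $\mu$ is constant. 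Collecting these identities gives the proposition. I expect the only real subtlety to be the bookkeeping of the signs $\varepsilon_{\ell}$ and the orientation choice in the inductive step establishing $\mathbf{e}_{i}^{\ast}=\mathbf{A}\mathbf{e}_{i}$ and $\kappa_{i}^{\ast}=\kappa_{i}/|\mu|$; everything after that is cancellation of the factor $|\mu|$, which is exactly why the ratios $\kappa_{i}/\kappa_{1}$ and the logarithmic-derivative combination $-\kappa_{1}'/\kappa_{1}$ (in the $\sigma$ parameter) are the natural p-similarity invariants.
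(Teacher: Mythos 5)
Your proposal is correct and follows essentially the same route as the paper: establish $\mathbf{e}_i^{\ast}=\mathbf{A}\mathbf{e}_i$ and $\kappa_i^{\ast}=\kappa_i/\left\vert\mu\right\vert$, deduce $d\sigma^{\ast}=d\sigma$, and let the factor $\left\vert\mu\right\vert$ cancel in $\tilde{\kappa}_1$ and $\tilde{\kappa}_i$. The only cosmetic difference is your hedging about the leftover $\left\vert\mu\right\vert$ in the frame: since $\vec{f}=\mu\mathbf{A}$ itself carries the scale factor, one has exactly $\vec{f}\left(\frac{1}{\kappa_1}\mathbf{e}_i\right)=\frac{1}{\kappa_1^{\ast}}\mathbf{e}_i^{\ast}$, which is precisely the equivariance the paper means by ``invariant.''
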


\begin{proof}
Let be $\mu>0$ when $n$ is odd and $\sigma^{\ast}$ be a spherical arc length
parameter of $\alpha^{\ast}.$ $\{\mathbf{e}_{1}^{\ast}=A\left(  \mathbf{e}%
_{1}\right)  ,\cdots,$ $\mathbf{e}_{n}^{\ast}=A\left(  \mathbf{e}_{n}\right)
\}$ is the Frenet-Serret frame of $\alpha^{\ast}.$ From $\left(
\ref{1}\right)  ,$ $\left(  \ref{1'}\right)  $ and $\left(  \ref{2}\right)  ,$
the i$^{\text{th}}$ curvature $\kappa_{i}^{\ast}$ of non-null curve
$\alpha^{\ast}$ can compute as follow%
\begin{equation}
\kappa_{i}^{\ast}\left(  s^{\ast}\right)  =\frac{d\mathbf{e}_{i}^{\ast}%
}{ds^{\ast}}\cdot\mathbf{e}_{i+1}^{\ast}=\frac{1}{\mu}\kappa_{i}\left(
s\right)  . \label{02}%
\end{equation}
We have
\begin{equation}
d\sigma^{\ast}=\kappa_{1}^{\ast}ds^{\ast}=\kappa_{1}ds=d\sigma\label{f3}%
\end{equation}
by using $\left(  \ref{02}\right)  $ and $\left(  \ref{4}\right)  .$ Then, we
get $\tilde{\kappa}_{1}^{\ast}=\tilde{\kappa}_{1}$ and $\tilde{\kappa}%
_{i}^{\ast}=\tilde{\kappa}_{i}$, i.e., the functions $\tilde{\kappa}_{i}$ are
invariant under the p-similarity transformation. Also, by definition of the
similarity transformation, we can write $\vec{f}\left(  \frac{1}{\kappa_{1}%
}\mathbf{e}_{1}\right)  =\frac{1}{\kappa_{1}^{\ast}}\mathbf{e}_{1}^{\ast}.$
Thus, the pseudo-orthogonal frame $\left(  \ref{8}\right)  $ is invariant
under the orientation-preserving p-similarity transformation. This proof also
is valid in the case of even $n$ and $\mu<0.$
\end{proof}

\begin{definition}
Let $\alpha:I\rightarrow\mathbb{E}_{1}^{n}$ be non-null curve of the class
$C^{n}$ parameterized by a spherical arc length parameter $\sigma.$ The
functions
\begin{equation}
\tilde{\kappa}_{1}=-\frac{1}{\kappa_{1}}\frac{d\kappa_{1}}{d\sigma}\text{
\ \ \ and \ \ \ \ }\tilde{\kappa}_{i}=\frac{\kappa_{i}}{\kappa_{1}}\text{
\ \ }i=2,3,\cdots,n-1 \label{000}%
\end{equation}
are called \emph{p-shape curvatures }of $\alpha$ in $\mathbb{E}_{1}^{n}.$
\end{definition}

\begin{remark}
The equation $\left(  \ref{f1}\right)  $ may be considered as the structure
equation in $\mathbb{E}_{1}^{n}$ with respect to the pseudo-orthogonal frame
$\left(  \ref{8}\right)  $ and the group \textbf{Sim}$^{+}\left(
\mathbb{E}_{1}^{n}\right)  .$
\end{remark}

\section{Fundamental Theorem of a Non-null Curve in Lorentzian Similarity
Geometry}

\qquad Two non-null space curves which have the same curvatures are always
equivalent according to Lorentzian motion. This notion can be extended with
respect to \textbf{Sim}$\left(  \mathbb{E}_{1}^{n}\right)  $ for the non-null
space curves which have the same p-shape curvatures in $\mathbb{E}_{1}^{n}.$

\begin{theorem}
\label{teklik}(Uniqueness Theorem) Let $\alpha,\alpha^{\ast}:I\rightarrow
\mathbb{E}_{1}^{n}$ be two non-null space curves of class $C^{n}$
parameterized by the same spherical arc length parameter $\sigma$ and have the
same causal character, where $I\subset%
\mathbb{R}
$ is an open interval. Suppose that $\alpha$ and $\alpha^{\ast}$ have the same
p-shape curvatures $\tilde{\kappa}_{i}=\tilde{\kappa}_{i}^{\ast}$ for any
$\sigma\in I,$ $i=1,2,\cdots,n-1.$

$\mathbf{i})$ If $n$ is odd and $\alpha,\alpha^{\ast}$ are the timelike
curves, there exists a $f\in$\textbf{Sim}$^{-}\left(  \mathbb{E}_{1}%
^{n}\right)  $ such that $\alpha^{\ast}=f\circ\alpha.$

$\mathbf{ii})$ If $n$ is odd and $\alpha,\alpha^{\ast}$ are the spacelike
curves, there exists a $f\in$\textbf{Sim}$^{+}\left(  \mathbb{E}_{1}%
^{n}\right)  $ such that $\alpha^{\ast}=f\circ\alpha.$

$\mathbf{iii})$ If $n$ is even, there exists a $f\in$\textbf{Sim}$^{+}\left(
\mathbb{E}_{1}^{n}\right)  $ such that $\alpha^{\ast}=f\circ\alpha.$
\end{theorem}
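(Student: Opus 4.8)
The plan is to reduce the problem to the classical uniqueness theorem for non-null Frenet curves under Lorentzian motions. First I would reparametrize both curves by the ordinary arc length $s$ and compare their ordinary curvatures. Since $\alpha$ and $\alpha^{\ast}$ share the same spherical arc length $\sigma$ and $d\sigma=\kappa_{1}ds=\kappa_{1}^{\ast}ds^{\ast}$, the relation $\tilde\kappa_{1}=\tilde\kappa_{1}^{\ast}$ (i.e. $-\tfrac{1}{\kappa_{1}}\tfrac{d\kappa_{1}}{d\sigma}=-\tfrac{1}{\kappa_{1}^{\ast}}\tfrac{d\kappa_{1}^{\ast}}{d\sigma}$) integrates to $\kappa_{1}^{\ast}=c\,\kappa_{1}$ for some positive constant $c$; then $\tilde\kappa_{i}=\tilde\kappa_{i}^{\ast}$ forces $\kappa_{i}^{\ast}=c\,\kappa_{i}$ for all $i$. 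Setting $\mu=1/c$ and rescaling $\alpha$ by $\mu$ (an element of \textbf{Sim}), the rescaled curve $\mu\alpha$ has ordinary curvatures $\mu^{-1}\kappa_{i}=c\kappa_{i}=\kappa_{i}^{\ast}$, matching those of $\alpha^{\ast}$, and the same causal character.

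Next I would invoke the classical fundamental theorem: two non-null Frenet curves in $\mathbb{E}_{1}^{n}$ with the same causal character and the same ordinary curvatures $\kappa_{1},\dots,\kappa_{n-1}$ differ by a Lorentzian motion $x\mapsto \mathbf{B}x+\mathbf{b}$ with $\mathbf{B}$ pseudo-orthogonal. Composing the scaling with this motion yields a map $f(x)=\mu\mathbf{B}x+\mathbf{b}$ with $\alpha^{\ast}=f\circ\alpha$, which is a p-similarity as in $(\ref{05})$ once we check $\det\mathbf{B}=1$. The care needed here is exactly in that determinant bookkeeping, which is where the case split of the theorem comes from: the standard construction of $\mathbf{B}$ maps the Frenet frame of one curve to that of the other, and a priori $\det\mathbf{B}=\pm1$; one must arrange $\det\mathbf{B}=1$ by possibly absorbing a sign into $\mu$.

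The main obstacle — really the only subtle point — is tracking orientation and the sign of $\mu$ through the dimension parity and causal character. If $\det\mathbf{B}=1$ already, take $\mu=1/c>0$ and $f\in$\textbf{Sim}$^{+}$, giving case $\mathbf{iii}$ directly (any $n$) and case $\mathbf{ii}$ when it occurs. If instead the frame-matching matrix has $\det=-1$, I would replace $\mathbf{B}$ by $-\mathbf{B}$ and $\mu$ by $-\mu$; this keeps $f(x)=\mu\mathbf{B}x+\mathbf{b}$ unchanged while making $\det(-\mathbf{B})=(-1)^{n}\det\mathbf{B}$. When $n$ is even this equals $-1$, which does not help, but then $-\mathbf{B}$ has determinant $+1$ is false — so for even $n$ one argues instead that the orientation is automatically correct because reversing all $n$ frame vectors is itself a rotation composed with the natural frame, forcing $\det\mathbf{B}=1$; hence case $\mathbf{iii}$. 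When $n$ is odd, $\det(-\mathbf{B})=-\det\mathbf{B}$, so one can always flip to $\det=+1$, but the flip changes the sign of $\mu$, and with $n$ odd a negative $\mu$ puts $f$ in \textbf{Sim}$^{-}$; whether the flip is needed is governed by the causal character (timelike tangent contributing an extra $\varepsilon_{1}=-1$ to the relevant Gram determinant), producing case $\mathbf{i}$ (timelike, \textbf{Sim}$^{-}$) versus case $\mathbf{ii}$ (spacelike, \textbf{Sim}$^{+}$).

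Finally I would note uniqueness of $f$: since a p-similarity is an affine map determined by its action on an affine frame, and $f$ must send the point $\alpha(\sigma_{0})$ and the frame $\{\mathbf{e}_{i}(\sigma_{0})\}$ (up to the fixed scaling $\mu$) to $\alpha^{\ast}(\sigma_{0})$ and $\{\mathbf{e}_{i}^{\ast}(\sigma_{0})\}$, the map $f$ is forced. I would write the argument concretely by fixing $\sigma_{0}\in I$, defining $\mathbf{B}$ as the unique pseudo-orthogonal matrix with $\mathbf{B}\,\mathbf{e}_{i}(\sigma_{0})=\mathbf{e}_{i}^{\ast}(\sigma_{0})$ (possible because both are pseudo-orthonormal frames with matching signature), setting $\mu=1/c$ with a sign correction as above, $\mathbf{b}=\alpha^{\ast}(\sigma_{0})-\mu\mathbf{B}\alpha(\sigma_{0})$, and then showing $f\circ\alpha$ and $\alpha^{\ast}$ satisfy the same linear ODE system $(\ref{f1})$ with the same initial data at $\sigma_{0}$, so they coincide on $I$ by uniqueness of solutions.
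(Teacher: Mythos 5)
Your overall skeleton coincides with the paper's: integrate $\tilde{\kappa}_{1}=\tilde{\kappa}_{1}^{\ast}$ to get $\kappa_{i}^{\ast}=c\,\kappa_{i}$ with $c>0$, absorb the constant into a homothety, match the Frenet frames at one point $\sigma_{0}$ by a Lorentzian motion, propagate the frame identity over all of $I$, and integrate the tangent to recover $\alpha^{\ast}=f\circ\alpha$. Replacing the paper's explicit computation (the auxiliary functions $\Psi$ and $\Phi$, whose derivatives are shown to vanish using the structure equations $\left(\ref{f1}\right)$) by an appeal to uniqueness of solutions of the linear Frenet system with fixed initial data is a legitimate and equivalent substitute; that part of your argument is sound.

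The genuine gap sits exactly where you say the only subtlety is: the determinant and sign bookkeeping that generates the trichotomy (i)--(iii). Your claim that for even $n$ the frame-matching matrix $\mathbf{B}$ automatically satisfies $\det\mathbf{B}=1$ (``reversing all $n$ frame vectors is itself a rotation composed with the natural frame'') is not an argument: two pseudo-orthonormal Frenet frames with the same signature pattern can differ by a pseudo-orthogonal map of determinant $-1$, and for even $n$ changing the sign of $\mu$ does not repair this. Likewise, in the odd case you assert, but never derive, that the causal character decides whether the flip is needed. The paper's mechanism is concrete: it chooses the Lorentzian motion $\varphi$ so that $\varphi\left(\mathbf{e}_{i}\left(\sigma_{0}\right)\right)=\varepsilon_{i}\mathbf{e}_{i}^{\ast}\left(\sigma_{0}\right)$ --- the factors $\varepsilon_{i}$, whose product is $-1$ since exactly one frame vector is timelike, are what permit taking $\varphi$ in the $\det=1$ component --- proves $\varphi\left(\mathbf{e}_{i}\left(\sigma\right)\right)=\varepsilon_{i}\mathbf{e}_{i}^{\ast}\left(\sigma\right)$ for all $\sigma$, and then the integration step yields $\alpha^{\ast}=\varepsilon_{1}g\left(\alpha\right)+\mathbf{b}$ with $g=\mu\varphi$ and $\mu>0$. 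Hence the similarity ratio carries the sign $\varepsilon_{1}$ of the tangent: for timelike curves the scalar is $-\mu<0$, which is orientation-reversing when $n$ is odd (case i) but still orientation-preserving when $n$ is even (case iii), while for spacelike curves the scalar is $+\mu>0$ (case ii). Without some device of this kind your construction does not explain why the timelike odd-dimensional case necessarily lands in \textbf{Sim}$^{-}\left(\mathbb{E}_{1}^{n}\right)$ rather than \textbf{Sim}$^{+}\left(\mathbb{E}_{1}^{n}\right)$.
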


\begin{proof}
Let's the Lorentzian curvatures and an arc-length parameters of $\alpha$,
$\alpha^{\ast}$denoted by $\kappa_{i},$ $\kappa_{i}^{\ast}$ and $s,$ $s^{\ast
}$. Using the equality $\tilde{\kappa}_{1}=\tilde{\kappa}_{1}^{\ast}$ and
$\left(  \ref{000}\right)  ,$ we get $\kappa_{1}=\mu\kappa_{1}^{\ast}$ for
some real constant $\mu>0.$ Then, the equalities $\tilde{\kappa}_{i}%
=\tilde{\kappa}_{i}^{\ast}$ $\left(  i=2,3,\cdots,n-1\right)  $ imply
$\kappa_{i}=\mu\kappa_{i}^{\ast}.$ On the other hand, from $\left(
\ref{f3}\right)  $ we can write $ds=\frac{1}{\mu}ds^{\ast}.$

We can choose any point $\sigma_{0}\in I.$ There exists a Lorentzian motion
$\varphi$ of $\mathbb{E}_{1}^{n}$ such that
\[
\varphi\left(  \alpha\left(  \sigma_{0}\right)  \right)  =\alpha^{\ast}\left(
\sigma_{0}\right)  \text{ \ \ and \ \ }\varphi\left(  \mathbf{e}_{i}\left(
\sigma_{0}\right)  \right)  =-\varepsilon_{i}\mathbf{e}_{i}^{\ast}\left(
\sigma_{0}\right)  \text{ for }i=1,2,\cdots,n.
\]
Let's consider the function $\Psi:I\rightarrow%
\mathbb{R}
$ defined by
\[
\Psi\left(  \sigma\right)  =\left\Vert \varphi\left(  \mathbf{e}_{1}\left(
\sigma\right)  \right)  -\varepsilon_{1}\mathbf{e}_{1}^{\ast}\left(
\sigma\right)  \right\Vert ^{2}+\left\Vert \varphi\left(  \mathbf{e}%
_{2}\left(  \sigma\right)  \right)  -\varepsilon_{2}\mathbf{e}_{2}^{\ast
}\left(  \sigma\right)  \right\Vert ^{2}+\cdots+\left\Vert \varphi\left(
\mathbf{e}_{n}\left(  \sigma\right)  \right)  -\varepsilon_{n}\mathbf{e}%
_{n}^{\ast}\left(  \sigma\right)  \right\Vert ^{2}.
\]
Then
\begin{align*}
\frac{d\Psi}{d\sigma}  &  =2\left(  \frac{d}{d\sigma}\varphi\left(
\mathbf{e}_{1}\left(  \sigma\right)  \right)  -\varepsilon_{1}\frac{d}%
{d\sigma}\mathbf{e}_{1}^{\ast}\left(  \sigma\right)  \right)  \cdot\left(
\varphi\left(  \mathbf{e}_{1}\left(  \sigma\right)  \right)  -\varepsilon
_{1}\mathbf{e}_{1}^{\ast}\left(  \sigma\right)  \right) \\
&  +2\left(  \frac{d}{d\sigma}\varphi\left(  \mathbf{e}_{2}\left(
\sigma\right)  \right)  -\varepsilon_{2}\frac{d}{d\sigma}\mathbf{e}_{2}^{\ast
}\left(  \sigma\right)  \right)  \cdot\left(  \varphi\left(  \mathbf{e}%
_{2}\left(  \sigma\right)  \right)  -\varepsilon_{2}\mathbf{e}_{2}^{\ast
}\left(  \sigma\right)  \right) \\
&  +\cdots+2\left(  \frac{d}{d\sigma}\varphi\left(  \mathbf{e}_{n}\left(
\sigma\right)  \right)  -\varepsilon_{n}\frac{d}{d\sigma}\mathbf{e}_{3}^{\ast
}\left(  \sigma\right)  \right)  \cdot\left(  \varphi\left(  \mathbf{e}%
_{n}\left(  \sigma\right)  \right)  -\varepsilon_{n}\mathbf{e}_{n}^{\ast
}\left(  \sigma\right)  \right)  .
\end{align*}
Using $\left\Vert \varphi\left(  \mathbf{e}_{i}\right)  \right\Vert
^{2}=\left\Vert \mathbf{e}_{i}\right\Vert ^{2}=\left\Vert \mathbf{e}_{i}%
^{\ast}\right\Vert ^{2}=1$ we can write
\begin{align*}
\frac{d\Psi}{d\sigma}  &  =-2\varepsilon_{1}\left[  \left(  \varphi\left(
\frac{d}{d\sigma}\mathbf{e}_{1}\right)  \right)  \cdot\mathbf{e}_{1}^{\ast
}+\varphi\left(  \mathbf{e}_{1}\right)  \cdot\left(  \frac{d}{d\sigma
}\mathbf{e}_{1}^{\ast}\right)  \right] \\
&  -2\varepsilon_{2}\left[  \left(  \varphi\left(  \frac{d}{d\sigma}%
\mathbf{e}_{2}\right)  \right)  \cdot\mathbf{e}_{2}^{\ast}+\varphi\left(
\mathbf{e}_{2}\right)  \cdot\left(  \frac{d}{d\sigma}\mathbf{e}_{2}^{\ast
}\right)  \right] \\
&  -\cdots-2\varepsilon_{n}\left[  \left(  \varphi\left(  \frac{d}{d\sigma
}\mathbf{e}_{n}\right)  \right)  \cdot\mathbf{e}_{n}^{\ast}+\varphi\left(
\mathbf{e}_{n}\right)  \cdot\left(  \frac{d}{d\sigma}\mathbf{e}_{n}^{\ast
}\right)  \right]  .
\end{align*}
From $\left(  \ref{5}\right)  ,$ we get
\begin{align*}
\frac{d\Psi}{d\sigma}  &  =\left(  -2\varepsilon_{1}\varepsilon_{2}%
+2\varepsilon_{2}\varepsilon_{1}\right)  \left[  \varphi\left(  \mathbf{e}%
_{2}\right)  \cdot\mathbf{e}_{1}^{\ast}\right]  +\left(  -2\varepsilon
_{1}\varepsilon_{2}+2\varepsilon_{2}\varepsilon_{1}\right)  \left[
\varphi\left(  \mathbf{e}_{1}\right)  \cdot\mathbf{e}_{2}^{\ast}\right] \\
&  +\left(  -2\varepsilon_{2}\varepsilon_{3}\tilde{\kappa}_{2}+2\varepsilon
_{3}\varepsilon_{2}\tilde{\kappa}_{2}^{\ast}\right)  \left[  \varphi\left(
\mathbf{e}_{3}\right)  \cdot\mathbf{e}_{2}^{\ast}\right]  +\left(
-2\varepsilon_{2}\varepsilon_{3}\tilde{\kappa}_{2}^{\ast}+2\varepsilon
_{2}\varepsilon_{3}\tilde{\kappa}_{2}\right)  \left[  \varphi\left(
\mathbf{e}_{2}\right)  \cdot\mathbf{e}_{3}^{\ast}\right] \\
&  +\cdots+\left(  2\varepsilon_{n-1}\varepsilon_{n}\tilde{\kappa}%
_{n-1}-2\varepsilon_{n-1}\varepsilon_{n}\tilde{\kappa}_{n-1}^{\ast}\right)
\left[  \varphi\left(  \mathbf{e}_{n-1}\right)  \cdot\mathbf{e}_{n}^{\ast
}\right] \\
&  +\left(  2\varepsilon_{n-1}\varepsilon_{n}\tilde{\kappa}_{n-1}^{\ast
}-2\varepsilon_{n-1}\varepsilon_{n}\tilde{\kappa}_{n-1}\right)  \left[
\varphi\left(  \mathbf{e}_{n}\right)  \cdot\mathbf{e}_{n-1}^{\ast}\right]  .
\end{align*}
Since we have $\tilde{\kappa}_{i}=\tilde{\kappa}_{i}^{\ast},$ we find
$\dfrac{d\Psi}{d\sigma}=0$ for any $\sigma\in I.$ On the other hand, we know
$\Psi\left(  \sigma_{0}\right)  =0$ and thus we can write $\Psi\left(
\sigma\right)  =0$ for any $\sigma\in I.$ As a result, we can say that
\begin{equation}
\varphi\left(  \mathbf{e}_{i}\left(  \sigma\right)  \right)  =\varepsilon
_{i}\mathbf{e}_{i}^{\ast}\left(  \sigma\right)  ,\text{ \ \ \ \ \ }%
\forall\sigma\in I,\text{ \ \ }i=1,2,...,n. \label{10}%
\end{equation}

\qquad The map $g=\mu\varphi:\mathbb{E}_{1}^{n}\rightarrow\mathbb{E}_{1}^{n}$
is a p-similarity of $\mathbb{E}_{1}^{n}$. We examine an other function
$\Phi:I\rightarrow%
\mathbb{R}
$ such that
\[
\Phi\left(  \sigma\right)  =\left\Vert \frac{d}{d\sigma}g\left(  \alpha\left(
\sigma\right)  \right)  -\varepsilon_{1}\frac{d}{d\sigma}\alpha^{\ast}\left(
\sigma\right)  \right\Vert ^{2}\text{ \ \ \ for }\forall\sigma\in I.
\]
Taking derivative of this function with respect to $\sigma$ we get
\begin{align*}
\frac{d\Phi}{d\sigma}  &  =2g\left(  \frac{d^{2}\alpha}{d\sigma^{2}}\right)
\cdot g\left(  \frac{d\alpha}{d\sigma}\right)  -2\varepsilon_{1}\left[
g\left(  \frac{d^{2}\alpha}{d\sigma^{2}}\right)  \cdot\frac{d\alpha^{\ast}%
}{d\sigma}\right] \\
&  -\varepsilon_{1}2\frac{d^{2}\alpha^{\ast}}{d\sigma^{2}}\cdot g\left(
\frac{d\alpha}{d\sigma}\right)  +2\left[  \frac{d^{2}\alpha^{\ast}}%
{d\sigma^{2}}\cdot\frac{d\alpha^{\ast}}{d\sigma}\right]  .
\end{align*}
Since the function $\varphi$ is linear map, we can write by $\left(
\ref{4}\right)  $ and $\left(  \ref{10}\right)  $ the following equation
\[
\frac{d\Phi}{d\sigma}=2\varepsilon_{1}^{\ast}\mu^{2}\frac{\tilde{\kappa}_{1}%
}{\kappa_{1}^{2}}-2\varepsilon_{1}^{\ast}\mu\frac{\tilde{\kappa}_{1}}%
{\kappa_{1}\kappa_{1}^{\ast}}-2\varepsilon_{1}^{\ast}\mu\frac{\tilde{\kappa
}_{1}^{\ast}}{\kappa_{1}\kappa_{1}^{\ast}}+2\varepsilon_{1}^{\ast}\frac
{\tilde{\kappa}_{1}^{\ast}}{\left(  \kappa_{1}^{\ast}\right)  ^{2}}.
\]
Using $\mu=\dfrac{\kappa_{1}}{\kappa_{1}^{\ast}}$ it is obtained $\dfrac
{d\Phi}{d\sigma}=0.$ Also, it can be found
\[
\frac{d}{d\sigma}g\left(  \alpha\left(  \sigma_{0}\right)  \right)  =g\left(
\frac{1}{\kappa}\mathbf{e}_{1}\left(  \sigma_{0}\right)  \right)
=\varepsilon_{1}\frac{1}{\kappa^{\ast}}\mathbf{e}_{1}^{\ast}\left(  \sigma
_{0}\right)
\]
Then, we conclude that $\Phi\left(  \sigma_{0}\right)  =0$ from the equation
$\dfrac{d}{d\sigma}\alpha^{\ast}\left(  \sigma_{0}\right)  =\dfrac{1}%
{\kappa^{\ast}}\mathbf{e}_{1}^{\ast}\left(  \sigma_{0}\right)  .$ Hence, it
can be said $\Phi\left(  \sigma\right)  =0$ for $\forall\sigma\in I$. This
means that
\[
\frac{d}{d\sigma}g\left(  \alpha\left(  \sigma\right)  \right)  =\varepsilon
_{\mathbf{e}_{1}}\frac{d}{d\sigma}\alpha^{\ast}\left(  \sigma\right)
\]
or equivalently $\alpha^{\ast}\left(  \sigma\right)  =\varepsilon_{1}g\left(
\alpha\left(  \sigma\right)  \right)  +\mathbf{b}$ where $\mathbf{b}$ is a
constant vector. Then, the image of $\alpha$ under the p-similarity
$f=\xi\circ\left(  \varepsilon_{1}g\right)  $ is the non-null curve
$\alpha^{\ast}$, where $\xi:\mathbb{E}_{1}^{n}\rightarrow\mathbb{E}_{1}^{n}$
is a translation function determined by $\mathbf{b}.$ We consider that $n$ is
odd. If the curves $\alpha,\alpha^{\ast}$ are taken as the timelike curve,
then the p-similarity transformation $f$ is an orientation-reversing
transformation. Also, when the curves $\alpha,\alpha^{\ast}$ are the spacelike
curves, p-similarity transformation $f$ is an orientation-preserving transformation.
\end{proof}

\qquad The following theorem show that every $n-1$ functions of class
$C^{\infty}$ according to a p-similarity determine a non-null Frenet curve
under some initial conditions.

\begin{theorem}
\label{varl}(Existence Theorem) Let $z_{i}:I\rightarrow%
\mathbb{R}
,$ $i=1,2,\cdots,n-1$, be functions of class $C^{\infty}$ such that $z_{1},$
$z_{2},\cdots,$ $z_{n-1}$ have the same sign and $\mathbf{e}_{1}^{0},$
$\mathbf{e}_{2}^{0},\cdots,$ $\mathbf{e}_{n}^{0}$ be a pseodo-orthonormal
n-frame at a point $x_{0}$ in the Minkowski n-space. According to a
p-similarity with center $x_{0}$ there exists a unique non-null space curve
$\alpha:I\rightarrow\mathbb{E}_{1}^{n}$ parameterized by a spherical
arc-length parameter such that $\alpha$ satisfies the following conditions:

$\left(  i\right)  $ There exists $\sigma_{0}\in I$ such that $\alpha\left(
\sigma_{0}\right)  =x_{0}$ and the Frenet-Serret n-frame of $\alpha$ at
$x_{0}$ is $\mathbf{e}_{1}^{0},$ $\mathbf{e}_{2}^{0},\cdots,$ $\mathbf{e}%
_{n}^{0}.$

$\left(  ii\right)  $ $\tilde{\kappa}_{i}\left(  \sigma\right)  =z_{i}\left(
\sigma\right)  ,$ for any $\sigma\in I$ and $i=1,2,\cdots,n-1.$
\end{theorem}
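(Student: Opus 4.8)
The plan is to reverse-engineer the construction in the Uniqueness Theorem: we are handed the p-shape curvatures $z_{i}$ and the initial pseudo-orthonormal frame, and we must build a curve realizing them. First I would recover genuine Frenet curvatures from the p-shape data. Set $\kappa_{1}(\sigma)=\exp\!\left(-\int_{\sigma_{0}}^{\sigma}z_{1}(u)\,du\right)$, which solves $z_{1}=-\frac{1}{\kappa_{1}}\frac{d\kappa_{1}}{d\sigma}$ with $\kappa_{1}(\sigma_{0})=1$ (any positive initial value works), and then $\kappa_{i}(\sigma)=z_{i}(\sigma)\,\kappa_{1}(\sigma)$ for $i=2,\dots,n-1$. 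Because the $z_{i}$ all have the same sign, after possibly replacing $\alpha$ by its mirror image the $\kappa_{i}$ for $i\le n-2$ are positive and $\kappa_{n-1}\neq0$, so they are admissible Frenet curvatures; note that $\kappa_{1}>0$ automatically.

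Next I would solve the frame ODE. Consider the linear system $\frac{d}{d\sigma}(\mathbf{e}_{1},\dots,\mathbf{e}_{n})^{T}=\hat{K}(\sigma)(\mathbf{e}_{1},\dots,\mathbf{e}_{n})^{T}$, where $\hat{K}$ is the matrix appearing in $(\ref{5})$, now written purely in terms of the ratios $\kappa_{i}/\kappa_{1}=z_{i}$ (so $\hat{K}$ depends only on the given data, not on $\kappa_{1}$ itself). By the standard existence-uniqueness theorem for linear ODEs with $C^{\infty}$ coefficients there is a unique solution $\{\mathbf{e}_{1}(\sigma),\dots,\mathbf{e}_{n}(\sigma)\}$ on all of $I$ with $\mathbf{e}_{i}(\sigma_{0})=\mathbf{e}_{i}^{0}$. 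The coefficient matrix $\hat{K}$ is skew-adjoint with respect to the Lorentzian form (its entries are arranged exactly so that $\frac{d}{d\sigma}(\mathbf{e}_{i}\cdot\mathbf{e}_{j})=0$), so the Gram matrix of the $\mathbf{e}_{i}$ is constant; since it equals $\mathrm{diag}(\varepsilon_{1},\dots,\varepsilon_{n})$ at $\sigma_{0}$, the frame stays pseudo-orthonormal with the prescribed causal characters for all $\sigma$. Then define
\[
\alpha(\sigma)=x_{0}+\int_{\sigma_{0}}^{\sigma}\frac{1}{\kappa_{1}(u)}\mathbf{e}_{1}(u)\,du .
\]
One checks directly that $\frac{d\alpha}{d\sigma}=\frac{1}{\kappa_{1}}\mathbf{e}_{1}$, and using $d\sigma=\kappa_{1}\,ds$ as in $(\ref{4})$ that $\|d\alpha/ds\|=1$; differentiating the frame relations and comparing with $(\ref{f})$ shows that $\{\mathbf{e}_{i}\}$ is exactly the Frenet-Serret frame of $\alpha$ and that its $i$th curvature is $\kappa_{i}$, hence its p-shape curvatures are the $z_{i}$. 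This gives existence and verifies $(i)$ and $(ii)$.

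For uniqueness up to a p-similarity centered at $x_{0}$: if $\beta$ is another non-null curve, parameterized by its spherical arc length, with the same p-shape curvatures and with Frenet frame at $\sigma_{0}$ equal to $\mathbf{e}_{i}^{0}$, then $\beta$ has exactly the same data fed into the Uniqueness Theorem (\ref{teklik}); that theorem produces an $f\in\mathbf{Sim}(\mathbb{E}_{1}^{n})$ with $\beta=f\circ\alpha$, and tracing the construction — $\varphi$ is chosen to match frames at $\sigma_{0}$, here already equal, and $\mu=\kappa_{1}/\kappa_{1}^{\ast}=1$ — shows $f$ fixes $x_{0}$, i.e. $f$ is a p-similarity with center $x_{0}$. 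The main obstacle is bookkeeping rather than conceptual: one must be careful that $\hat{K}$ can legitimately be expressed through the $z_{i}$ alone (so the frame ODE is well posed before $\kappa_{1}$ is integrated in), and one must check the sign/causal-character conventions so that the reconstructed $\kappa_{i}$ obey the positivity requirements $\kappa_{j}>0$, $\kappa_{n-1}\neq0$ of $(\ref{2})$ — this is exactly where the hypothesis that the $z_{i}$ share a common sign is used.
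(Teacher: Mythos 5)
Your proposal follows essentially the same route as the paper's proof: solve the linear frame system whose coefficient matrix is built from the $z_{i}$ (for $i\geq 2$), use its skew-adjointness with respect to $\mathbf{I}^{\ast}=\operatorname{diag}(\varepsilon_{1},\dots,\varepsilon_{n})$ to keep the frame pseudo-orthonormal, and then integrate $\frac{1}{\kappa_{1}}\mathbf{e}_{1}$ with $\frac{1}{\kappa_{1}}=e^{\int z_{1}\,d\sigma}$ to obtain $\alpha$, exactly as in equation $\left(\ref{d}\right)$. Your explicit recovery of $\kappa_{1}$ from $z_{1}$ and the appeal to Theorem $\ref{teklik}$ for the uniqueness-up-to-p-similarity part are correct and in fact make explicit two points the paper leaves implicit.
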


\begin{proof}
Let us consider the following system of differential equations with respect to
a matrix-valued function $\mathbf{W}\left(  \sigma\right)  =\left(
\mathbf{e}_{1},\text{ }\mathbf{e}_{2},\cdots,\mathbf{e}_{n}\right)  ^{T}$%
\begin{equation}
\frac{d\mathbf{W}}{d\sigma}\left(  \sigma\right)  =\mathbf{M}\left(
\sigma\right)  \mathbf{W}\left(  \sigma\right)  \label{f4}%
\end{equation}
with a given matrix
\[
\mathbf{M}\left(  \sigma\right)  =%
\begin{bmatrix}
0 & \varepsilon_{2} & 0 & \cdots & 0 & 0 & 0\\
-\varepsilon_{1} & 0 & \varepsilon_{3}z_{2} & \cdots & 0 & 0 & 0\\
0 & -\varepsilon_{2}z_{2} & 0 & \ddots & 0 & 0 & 0\\
0 & 0 & -\varepsilon_{3}z_{3} & \ddots & 0 & 0 & 0\\
\vdots & \vdots & \vdots & \ddots & \ddots & \vdots & \vdots\\
0 & 0 & 0 & \cdots & -\varepsilon_{n-2}z_{n-2} & 0 & \varepsilon_{n}z_{n-1}\\
0 & 0 & 0 & \cdots & 0 & -\varepsilon_{n-1}z_{n-1} & 0
\end{bmatrix}
.
\]
The system $\left(  \ref{f4}\right)  $ has a unique solution $\mathbf{W}%
\left(  \sigma\right)  $ which satisfies the initial conditions $\mathbf{W}%
\left(  \sigma_{0}\right)  =\left(  \mathbf{e}_{1}^{0},\text{ }\mathbf{e}%
_{2}^{0},\cdots,\mathbf{e}_{n}^{0}\right)  ^{T}$ for $\sigma_{0}\in I.$ If
$\mathbf{W}^{t}\left(  \sigma\right)  $ is the transposed matrix of
$\mathbf{W}\left(  \sigma\right)  ,$ then
\begin{align*}
\frac{d}{d\sigma}\left(  \mathbf{I}^{\ast}\mathbf{W}^{t}\mathbf{I}^{\ast
}\mathbf{W}\right)   &  =\mathbf{I}^{\ast}\frac{d}{d\sigma}\mathbf{W}%
^{t}\mathbf{I}^{\ast}\mathbf{W}+\mathbf{I}^{\ast}\mathbf{W}^{t}\mathbf{I}%
^{\ast}\frac{d}{d\sigma}\mathbf{W}\\
&  =\mathbf{I}^{\ast}\mathbf{W}^{t}\mathbf{M}^{t}\mathbf{I}^{\ast}%
\mathbf{W}+\mathbf{I}^{\ast}\mathbf{W}^{t}\mathbf{I}^{\ast}\mathbf{MW}\\
&  =\mathbf{I}^{\ast}\mathbf{W}^{t}\left(  \mathbf{M}^{t}\mathbf{I}^{\ast
}+\mathbf{I}^{\ast}\mathbf{M}\right)  \mathbf{W}=0
\end{align*}
because of the equation $\mathbf{M}^{t}\mathbf{I}^{\ast}+\mathbf{I}^{\ast
}\mathbf{M}=%
\begin{bmatrix}
0
\end{bmatrix}
_{n\times n}$ where $\mathbf{I}^{\ast}=diag(\varepsilon_{1},\varepsilon
_{2},\cdots,\varepsilon_{n}).$ Also, we have $\mathbf{I}^{\ast}\mathbf{W}%
^{t}\left(  \sigma_{0}\right)  \mathbf{I}^{\ast}\mathbf{W}\left(  \sigma
_{0}\right)  =\mathbf{I}$ where $\mathbf{I}$ is the unit matrix since
$\left\{  \mathbf{e}_{1}^{0},\text{ }\mathbf{e}_{2}^{0},\cdots,\mathbf{e}%
_{n}^{0}\right\}  $ is the pseudo-orthonormal n-frame. As a result, we find
$\mathbf{I}^{\ast}\mathbf{X}^{t}\left(  \sigma\right)  \mathbf{I}^{\ast
}\mathbf{X}\left(  \sigma\right)  =\mathbf{I}$ for any $\sigma\in I.$ It means
that the vector fields $\left\{  \mathbf{e}_{1}^{0},\text{ }\mathbf{e}_{2}%
^{0},\cdots,\mathbf{e}_{n}^{0}\right\}  $ form pseudo-orthonormal frame field.

\qquad Let $\alpha:I\rightarrow\mathbb{E}_{1}^{n}$ be the regular non-null
curve given by
\begin{equation}
\alpha\left(  \sigma\right)  =x_{0}+\int_{\sigma_{0}}^{\sigma}e^{\int
z_{1}\left(  \sigma\right)  d\sigma}\mathbf{e}_{1}\left(  \sigma\right)
d\sigma,\text{ \ \ \ \ \ \ \ \ }\sigma\in I. \label{d}%
\end{equation}
By the equality $\left(  \ref{f4}\right)  $ and the linear independence of
$\left\{  \mathbf{e}_{1},\text{ }\mathbf{e}_{2},\cdots,\mathbf{e}_{n}\right\}
,$ we get that $\alpha\left(  \sigma\right)  $ is a non-null space curve in
$\mathbb{E}_{1}^{n}$ with p-shape curvatures $\tilde{\kappa}_{i}\left(
\sigma\right)  =z_{i}\left(  \sigma\right)  $ for $i=1,2,\cdots,n-1.$ Also,
the pseudo-orthonormal n-frame $\mathbf{e}_{1}\left(  \sigma\right)  ,$
$\mathbf{e}_{2}\left(  \sigma\right)  ,\cdots,\mathbf{e}_{n}\left(
\sigma\right)  $ is a Frenet-Serret n-frame of the non-null curve $\alpha.$
\end{proof}

\qquad By the Theorem $\ref{teklik}$ and $\ref{varl},$ we get the following theorem.

\begin{theorem}
\label{tek}Let $z_{i}:I\rightarrow%
\mathbb{R}
,$ $i=1,2,\cdots,n-1$, be the functions of class $C^{\infty}.$ According to
p-similarity there exists a unique non-null space curve with p-shape
curvatures $z_{i}.$
\end{theorem}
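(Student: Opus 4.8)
The plan is to combine the Uniqueness Theorem (Theorem~\ref{teklik}) and the Existence Theorem (Theorem~\ref{varl}) directly, since Theorem~\ref{tek} is merely their synthesis. First I would invoke Theorem~\ref{varl}: given the $C^{\infty}$ functions $z_{1},\dots,z_{n-1}$ (taken with a common sign so that the Frenet conditions $\kappa_{j}>0$ are respected), there exists a non-null Frenet curve $\alpha:I\rightarrow\mathbb{E}_{1}^{n}$, parameterized by a spherical arc-length parameter, whose p-shape curvatures satisfy $\tilde{\kappa}_{i}=z_{i}$ for $i=1,\dots,n-1$. This establishes the existence half of the claim.

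For uniqueness, suppose $\alpha$ and $\alpha^{\ast}$ are two non-null space curves of class $C^{n}$ with the same p-shape curvatures $z_{i}$. I would first note that the sign condition on the $z_{i}$, together with the Frenet equations~(\ref{f}), forces $\alpha$ and $\alpha^{\ast}$ to have the same causal character (the causal type of each $\mathbf{e}_{i}$ is determined recursively by the signs of the curvatures and the initial causal character, which in turn is pinned down by requiring a genuine non-null Frenet frame). Once the curves share the same spherical arc-length parameter $\sigma$ and the same causal character, Theorem~\ref{teklik} applies verbatim: there is an $f\in\textbf{Sim}\left(\mathbb{E}_{1}^{n}\right)$ — orientation-preserving or orientation-reversing according to the parity of $n$ and the causal type, as in cases $\mathbf{i})$–$\mathbf{iii})$ of that theorem — with $\alpha^{\ast}=f\circ\alpha$. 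Hence any two such curves are p-similar, which is exactly the uniqueness asserted.

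The only genuine obstacle is bookkeeping rather than mathematics: one must make sure the hypothesis ``common sign of the $z_{i}$'' in Theorem~\ref{varl} is compatible with the hypothesis ``same causal character'' in Theorem~\ref{teklik}, and that the parameter in both theorems is the spherical arc-length parameter so that the curvature functions $\tilde{\kappa}_{i}$ are compared on the same footing. I would therefore spend a sentence observing that the curve produced by~(\ref{d}) is automatically parameterized by $\sigma$, and that its causal character is determined by the data, so that the two theorems dovetail without any gap. With that remark in place, the proof of Theorem~\ref{tek} is simply: \emph{existence} from Theorem~\ref{varl}, \emph{uniqueness up to p-similarity} from Theorem~\ref{teklik}. $\blacksquare$
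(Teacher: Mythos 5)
Your proposal is correct and matches the paper's own argument, which consists precisely of the one-line observation that the theorem follows by combining Theorem~\ref{teklik} (uniqueness) and Theorem~\ref{varl} (existence). Your additional bookkeeping about the common-sign hypothesis, the causal character, and the spherical arc-length parametrization is more careful than what the paper records, but it is the same route.
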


\begin{example}
Let p-shape curvatures $\left(  \tilde{\kappa}_{1},\tilde{\kappa}_{2}\right)
$ of the $\alpha:I\rightarrow\mathbb{E}_{1}^{3}$ be $\left(  0,a\right)  ,$
where $a\neq0$ is real constant, and the unit vector $\mathbf{e}_{2}\left(
\sigma\right)  $ be a timelike vector. Choose initial conditions
\begin{equation}
\mathbf{e}_{1}^{0}=\left(  0,-\frac{1}{\sqrt{1+a^{2}}},\frac{a}{\sqrt{1+a^{2}%
}}\right)  \mathbf{,}\text{ }\mathbf{\mathbf{e}}_{2}^{0}=\left(  1,0,0\right)
,\text{ }\mathbf{e}_{3}^{0}=\left(  0,\frac{a}{\sqrt{1+a^{2}}},\frac{1}%
{\sqrt{1+a^{2}}}\right)  . \label{i1}%
\end{equation}
Then, the system $\left(  \ref{f4}\right)  $ describes a spacelike vector
$\mathbf{e}_{1}$ defined by%
\begin{equation}
\mathbf{e}_{1}\left(  \sigma\right)  =\left(  \frac{1}{\sqrt{1+a^{2}}}%
\sinh\left(  \sqrt{1+a^{2}}\sigma\right)  ,-\frac{1}{\sqrt{1+a^{2}}}%
\cosh\left(  \sqrt{1+a^{2}}\sigma\right)  ,\frac{a}{\sqrt{1+a^{2}}}\right)
\label{i2}%
\end{equation}
with $\mathbf{e}_{1}\left(  0\right)  =\mathbf{e}_{1}^{0},$ in the Minkowski
3-space. Solving the equation $\left(  \ref{d}\right)  $ we obtain the
spacelike Frenet curve $\alpha$ (see figure 1) parameterized by
\begin{equation}
\alpha\left(  \sigma\right)  =\left(  \frac{1}{1+a^{2}}\cosh\left(
\sqrt{1+a^{2}}\sigma\right)  ,-\frac{1}{1+a^{2}}\sinh\left(  \sqrt{1+a^{2}%
}\sigma\right)  ,\frac{a}{\sqrt{1+a^{2}}}\sigma\right)  ,\ \ \sigma\in I.
\label{45}%
\end{equation}

\end{example}%

\begin{figure}
[ptb]
\begin{center}
\includegraphics[
height=3.1631in,
width=4.2in
]%
{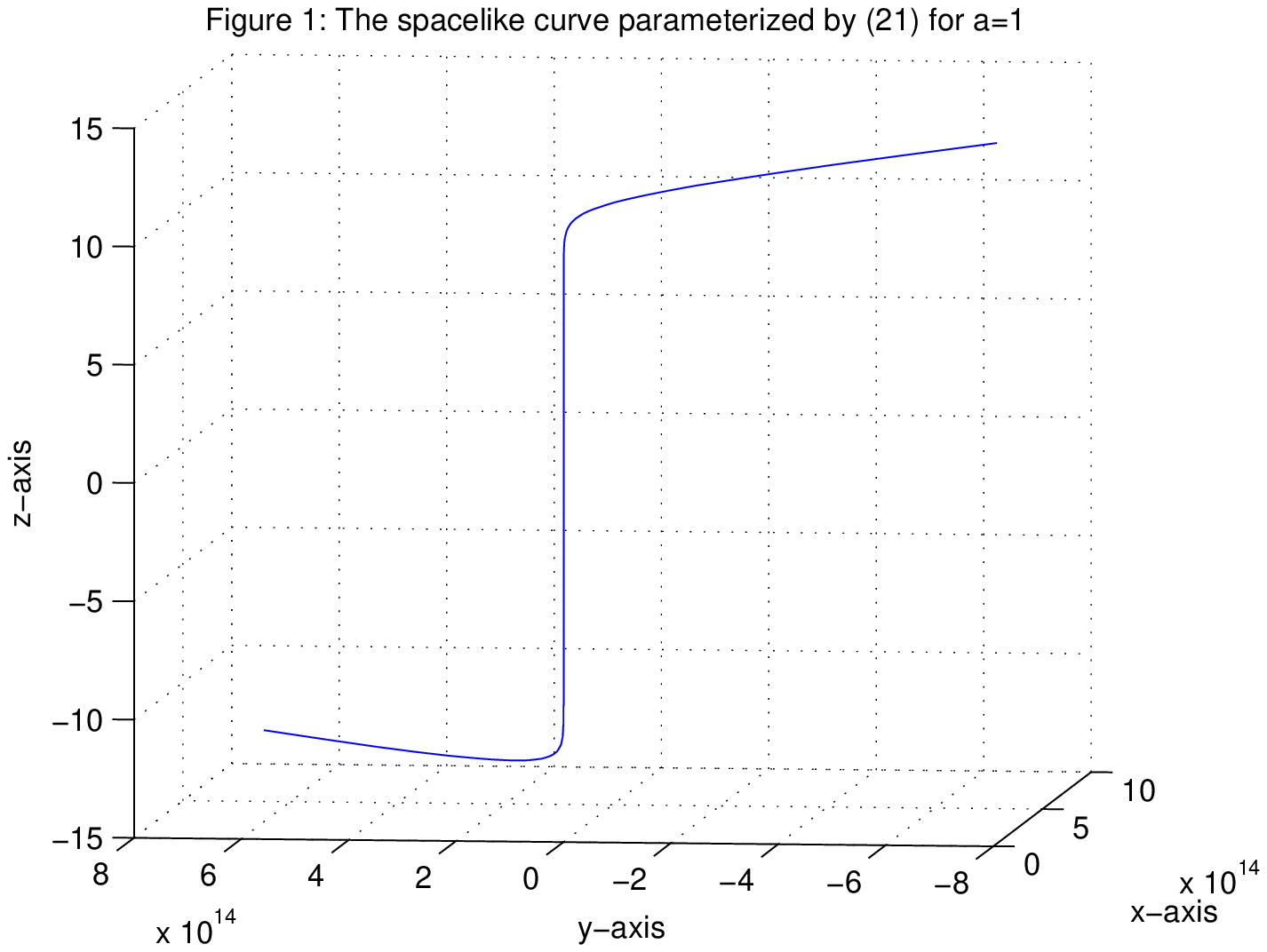}%
\end{center}
\end{figure}

\begin{example}
Let p-shape curvatures $\left(  \tilde{\kappa}_{1},\tilde{\kappa}_{2}%
,\tilde{\kappa}_{3}\right)  $ of the $\alpha:I\rightarrow\mathbb{E}_{1}^{4}$
be $\left(  \frac{1}{\sigma},0,0\right)  $ and the unit vector $\mathbf{e}%
_{1}\left(  \sigma\right)  $ be a timelike vector. Choose initial conditions
\[
\mathbf{e}_{1}^{0}=\left(  \sqrt{2},0,1,0\right)  \mathbf{,}\text{
}\mathbf{\mathbf{e}}_{2}^{0}=\left(  0,\frac{1}{\sqrt{2}},0,\frac{1}{\sqrt{2}%
}\right)  ,\text{ }\mathbf{e}_{3}^{0}=\left(  1,0,\sqrt{2},0\right)  ,\text{
}\mathbf{e}_{4}^{0}=\left(  0,-\frac{1}{\sqrt{2}},0,\frac{1}{\sqrt{2}}\right)
.
\]
Then, the system $\left(  \ref{f4}\right)  $ describes a timelike vector
$\mathbf{e}_{1}$ defined by%
\[
\mathbf{e}_{1}\left(  \sigma\right)  =\left(  \sqrt{2}\cosh\sigma,\frac
{\sinh\sigma}{\sqrt{2}},\cosh\sigma,\frac{\sinh\sigma}{\sqrt{2}}\right)
\]
with $\mathbf{e}_{1}\left(  0\right)  =\mathbf{e}_{1}^{0},$ in the Minkowski
space-time $\mathbb{E}_{1}^{4}$. Solving the equation $\left(  \ref{d}\right)
$ we obtain the timelike Frenet curve parameterized by
\begin{align*}
\alpha\left(  \sigma\right)   &  =(\sqrt{2}\left(  \sigma\sinh\sigma
-\cosh\sigma\right)  ,\frac{1}{\sqrt{2}}\left(  \sigma\cosh\sigma-\sinh
\sigma\right)  ,\\
&  \sigma\sinh\sigma-\cosh\sigma,\frac{1}{\sqrt{2}}\left(  \sigma\cosh
\sigma-\sinh\sigma\right)  )
\end{align*}
for any $\sigma\in I,$ where $\sigma$ is a spherical arc-length
parametrization of $\alpha$.
\end{example}

\section{The Relation between Focal Curvatures and p-shape Curvatures}

\qquad Let $\alpha:I\rightarrow\mathbb{E}_{1}^{n}$ be a unit speed non-null
space curve with the Frenet frame $\mathbf{e}_{1},$ $\mathbf{e}_{2}%
,\cdots,\mathbf{e}_{n}$ and let $s$ be an arc length parameter of $\alpha.$
The curve $\mathbf{\gamma:}I\rightarrow\mathbb{E}_{1}^{n}$ consisting of the
centers of the osculating hyperspheres of the curve $\alpha$ is called the
\emph{focal curve }of $\alpha.$ The focal curve can be represented by
\[
\mathbf{\gamma}\left(  s\right)  =\alpha\left(  s\right)  +m_{1}\left(
s\right)  \mathbf{e}_{2}+m_{2}\left(  s\right)  \mathbf{e}_{3}+\cdots
+m_{n-1}\left(  s\right)  \mathbf{e}_{n}%
\]
where $m_{1},\cdots,m_{n-1}$ are smooth functions called \emph{focal
curvatures }of $\alpha.$ Then, we have the following theorem from
\cite{minkowski}.

\begin{theorem}
\label{focal}The Euclidean curvatures of a non-null space curve $\alpha$ in
$\mathbb{E}_{1}^{n},$ parameterized by arc length, are given in terms of the
focal curvatures of $\alpha$ by the formula:%
\[
\kappa_{1}=\frac{\varepsilon_{1}}{m_{1}},\text{ \ \ \ \ \ }\kappa_{i}%
=\frac{\varepsilon_{2}m_{1}m_{1}^{^{\prime}}+\varepsilon_{3}m_{2}%
m_{2}^{^{\prime}}+\cdots+\varepsilon_{i}m_{i-1}m_{i-1}^{^{\prime}}}%
{m_{i-1}m_{i}},\text{ \ \ \ \ \ for }2\leq i\leq n.
\]

\end{theorem}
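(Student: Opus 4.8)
The plan is to derive the formulas by directly computing the Frenet apparatus of $\alpha$ from the defining equation of the focal curve, exactly in the spirit of the Euclidean argument of Uribe-Vargas. First I would recall that the osculating hypersphere of $\alpha$ at $s$ has its center $\mathbf{\gamma}(s)$ characterized by the conditions $\langle \mathbf{\gamma}-\alpha,\mathbf{\gamma}-\alpha\rangle = R^2$ together with the vanishing of the first $n-1$ derivatives of $s\mapsto \langle \mathbf{\gamma}-\alpha(s),\mathbf{\gamma}-\alpha(s)\rangle$; equivalently $\mathbf{\gamma}-\alpha$ is orthogonal to $\mathbf{e}_1,\dots,\mathbf{e}_{n-1}$, which is why $\mathbf{\gamma}-\alpha = m_1\mathbf{e}_2 + \cdots + m_{n-1}\mathbf{e}_n$ with no $\mathbf{e}_1$-component. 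I would treat $m_1,\dots,m_{n-1}$ as the unknowns and extract recursions by differentiating this expression and feeding in the Frenet--Serret equations $\left(\ref{f}\right)$.

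Concretely, write $\mathbf{\gamma}-\alpha = \sum_{k=1}^{n-1} m_k \mathbf{e}_{k+1}$ and differentiate with respect to $s$. Using $\alpha' = \mathbf{e}_1$ and $\left(\ref{f}\right)$, the $\mathbf{e}_1$-component of $(\mathbf{\gamma}-\alpha)'$ is $-1 - \varepsilon_1\kappa_1 m_1$ (up to the sign conventions carried by $\varepsilon_i$), and the requirement that the focal curve be stationary to the appropriate order forces this to vanish, giving $\kappa_1 = \varepsilon_1/m_1$ — more precisely, one reads off $\kappa_1 m_1 = \varepsilon_1$, matching the first formula. For the higher curvatures I would collect, for each $i$ with $2\le i\le n$, the $\mathbf{e}_i$-component of the derivative of $\mathbf{\gamma}-\alpha$ (or better, differentiate the scalar $\varepsilon_2 m_1^2 + \varepsilon_3 m_2^2 + \cdots$, which is essentially $\langle \mathbf{\gamma}-\alpha,\mathbf{\gamma}-\alpha\rangle = R^2$, and compare with the telescoping produced by Frenet). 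The cross terms $-\varepsilon_{i-1}\kappa_{i-1}\mathbf{e}_{i-1}$ and $\varepsilon_{i+1}\kappa_i\mathbf{e}_{i+1}$ in $\left(\ref{f}\right)$ are what couple $m_{i-1}$, $m_i$ and the running sum $\varepsilon_2 m_1 m_1' + \cdots + \varepsilon_i m_{i-1}m_{i-1}'$, and solving the resulting linear relation for $\kappa_i$ produces exactly the stated quotient $\kappa_i = (\varepsilon_2 m_1 m_1' + \cdots + \varepsilon_i m_{i-1}m_{i-1}')/(m_{i-1}m_i)$.

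Since the statement is quoted from \cite{minkowski}, I would either cite that reference for the computation or, if a self-contained argument is wanted, present the induction on $i$: assuming the formula for $\kappa_1,\dots,\kappa_{i-1}$, differentiate the $i$-th orthogonality/tangency condition once more and isolate $\kappa_i$. The bookkeeping of the causal signs $\varepsilon_\ell$ is the only genuinely delicate point — in the Lorentzian setting each $\|\mathbf{e}_\ell\|^2 = \varepsilon_\ell$ rather than $1$, so every inner product $\mathbf{e}_\ell\cdot\mathbf{e}_\ell$ that silently equals $1$ in the Euclidean proof now carries a sign, and one must check that these propagate consistently through the recursion to leave the clean form above. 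I expect that sign-tracking to be the main obstacle; the differential-geometric skeleton is otherwise a routine adaptation of the Euclidean focal-curve identities.
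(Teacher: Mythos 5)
The paper offers no proof of this statement to compare against: Theorem \ref{focal} is imported wholesale from \cite{minkowski} (``Then, we have the following theorem from \cite{minkowski}''), so the only question is whether your blind reconstruction is viable. It essentially is. Your skeleton is the standard focal-curvature derivation (the Lorentzian analogue of Uribe-Vargas's identities, which is what \cite{minkowski} carries out): the contact conditions of the osculating hypersphere give $\mathbf{e}_1\cdot(\gamma-\alpha)=0$ and, at the next order, $\varepsilon_2\kappa_1\,\mathbf{e}_2\cdot(\gamma-\alpha)=\varepsilon_1$, which together with $\mathbf{e}_2\cdot(\gamma-\alpha)=\varepsilon_2 m_1$ yields $\kappa_1 m_1=\varepsilon_1$; the third-order condition gives $\kappa_1'm_1+\varepsilon_2\kappa_1\kappa_2 m_2=0$, which upon substituting $\kappa_1=\varepsilon_1/m_1$ is exactly $\kappa_2=\varepsilon_2 m_1 m_1'/(m_1 m_2)$, and the higher conditions telescope in the same way to the stated quotients.

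Two points to tighten. First, a sign slip: the $\mathbf{e}_1$-coefficient of $\sum_k\left(m_k'\mathbf{e}_{k+1}+m_k\mathbf{e}_{k+1}'\right)$ is $-\varepsilon_1\kappa_1 m_1$, and equating it with the $\mathbf{e}_1$-coefficient of $\gamma'-\mathbf{e}_1$ gives $-1=-\varepsilon_1\kappa_1 m_1$, i.e. $\kappa_1 m_1=\varepsilon_1$; the expression you wrote, $-1-\varepsilon_1\kappa_1 m_1=0$, would give the opposite sign, so your hedge ``up to the sign conventions'' is doing real work there. Second, and more substantively, the vanishing of the $\mathbf{e}_1,\dots,\mathbf{e}_{n-1}$ components of $\gamma'$ is not an axiom of ``stationarity'' you may simply impose on the moving decomposition — it is itself a consequence of the contact conditions and must be derived. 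The clean way to run your induction is therefore to fix the center $c=\gamma(s_0)$, set $F(s)=(c-\alpha(s))\cdot(c-\alpha(s))$, and impose $F'(s_0)=\cdots=F^{(n)}(s_0)=0$; each successive derivative, expanded with the Frenet equations $\left(\ref{f}\right)$ and the expansion $c-\alpha=\sum_k m_k\mathbf{e}_{k+1}$, produces one new linear relation that isolates the next $\kappa_i$. With that reorganization, and careful tracking of the factors $\mathbf{e}_\ell\cdot\mathbf{e}_\ell=\varepsilon_\ell$ that you correctly identify as the only delicate point, your argument goes through.
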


\qquad Now, we can restate all the p-shape curvatures $\tilde{\kappa}_{i}$ via
the focal curvatures and their derivatives.

\begin{proposition}
Let $\alpha:I\rightarrow\mathbb{E}_{1}^{n}$ be a unit speed non-null space
curve whose all Euclidean curvatures are non-zero. Then,
\begin{equation}
\tilde{\kappa}_{1}=\varepsilon_{1}m_{1}^{^{\prime}},\text{ \ \ \ }%
\tilde{\kappa}_{i}=\frac{\varepsilon_{1}m_{1}}{m_{i-1}m_{i}}\left(
\varepsilon_{2}m_{1}m_{1}^{^{\prime}}+\varepsilon_{3}m_{2}m_{2}^{^{\prime}%
}+\cdots+\varepsilon_{i}m_{i-1}m_{i-1}^{^{\prime}}\right)  ,\text{ \ \ for
}2\leq i\leq n. \label{foc}%
\end{equation}

\end{proposition}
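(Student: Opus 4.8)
The plan is to substitute the expressions for the Euclidean curvatures $\kappa_{i}$ from Theorem \ref{focal} directly into the definitions $\left( \ref{000}\right) $ of the p-shape curvatures, using two elementary facts: by $\left( \ref{4}\right) $ one has $d\sigma=\kappa_{1}\,ds$, so that $\frac{d}{d\sigma}=\frac{1}{\kappa_{1}}\frac{d}{ds}$; and $\varepsilon_{1}^{2}=1$. The hypothesis that all Euclidean curvatures are non-zero guarantees that the focal curvatures $m_{1},\dots ,m_{n-1}$ are non-vanishing, so every denominator appearing below is legitimate.

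First I would compute $\tilde{\kappa}_{1}$. Writing $'$ for $d/ds$, the two facts above give
\[
\tilde{\kappa}_{1}=-\frac{1}{\kappa_{1}}\frac{d\kappa_{1}}{d\sigma}=-\frac{1}{\kappa_{1}^{2}}\kappa_{1}^{\prime}.
\]
Since $\kappa_{1}=\varepsilon_{1}/m_{1}$ by Theorem \ref{focal}, we have $\kappa_{1}^{\prime}=-\varepsilon_{1}m_{1}^{\prime}/m_{1}^{2}$ and $1/\kappa_{1}^{2}=m_{1}^{2}$, so $\tilde{\kappa}_{1}=\varepsilon_{1}m_{1}^{\prime}$, which is the first formula in $\left( \ref{foc}\right) $.

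For $2\leq i\leq n$ (reading $\tilde{\kappa}_{i}=\kappa_{i}/\kappa_{1}$ throughout this range), it then suffices to divide the expression for $\kappa_{i}$ in Theorem \ref{focal} by $\kappa_{1}=\varepsilon_{1}/m_{1}$; using $1/\varepsilon_{1}=\varepsilon_{1}$ this yields
\[
\tilde{\kappa}_{i}=\frac{\kappa_{i}}{\kappa_{1}}=\frac{\varepsilon_{1}m_{1}}{m_{i-1}m_{i}}\left( \varepsilon_{2}m_{1}m_{1}^{\prime}+\varepsilon_{3}m_{2}m_{2}^{\prime}+\cdots+\varepsilon_{i}m_{i-1}m_{i-1}^{\prime}\right) ,
\]
which is the second formula in $\left( \ref{foc}\right) $. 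There is no serious obstacle here: once Theorem \ref{focal} is available the argument is purely algebraic, and the only points requiring attention are the bookkeeping of the signs $\varepsilon_{\ell}$ and the non-vanishing of the $m_{j}$ under the stated hypothesis.
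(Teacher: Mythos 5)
Your proposal is correct and follows essentially the same route as the paper: the paper rewrites $\tilde{\kappa}_{1}=-\frac{1}{\kappa_{1}}\frac{d\kappa_{1}}{d\sigma}$ as $\left(\frac{1}{\kappa_{1}}\right)^{\prime}$ via $d\sigma=\kappa_{1}\,ds$ and then substitutes the focal-curvature formulas of Theorem \ref{focal}, exactly the algebra you carry out (you merely spell out the sign bookkeeping that the paper leaves as "easily found").
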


\begin{proof}
By $\left(  \ref{000}\right)  $ we know
\begin{equation}
\tilde{\kappa}_{1}=\left(  \frac{1}{\kappa_{1}}\right)  ^{\prime
}\ \ \ \text{and}\ \ \ \tilde{\kappa}_{i}=\frac{\kappa_{i}}{\kappa_{1}%
}\ \ i=2,3,\cdots,n-1. \label{foc1}%
\end{equation}
So, It can be easily found the equations $\left(  \ref{foc}\right)  $ by using
the Theorem $\ref{focal}$ and $\left(  \ref{foc1}\right)  .$
\end{proof}

\section{The Non-null Self-similar Space Curves}

\qquad In this section, we study non-null self-similar space curves in the
Minkowski n-space $\mathbb{E}_{1}^{n}$. A non-null space curve $\alpha
:I\rightarrow\mathbb{E}_{1}^{n}$ is called \emph{self-similar }if any
p-similarity $f\in G$ conserve globally $\alpha$ and $G$ acts transitively on
$\alpha$ where $G$ is a one-parameter subgroup of \textbf{Sim}$\left(
\mathbb{E}_{1}^{n}\right)  .$ This means that all its the p-shape curvatures
$\tilde{\kappa}_{1},$ $\tilde{\kappa}_{2},\cdots,$ $\tilde{\kappa}_{n-1}$ are
constant. In fact, let $p=\alpha\left(  s_{1}\right)  $ and $q=\alpha\left(
s_{2}\right)  $ be two different points lying on $\alpha.$ Since $G$ acts
transitively on $\alpha,$ there is a similarity $f\in G$ such that $f\left(
p\right)  =q$ and then $\tilde{\kappa}_{i}\left(  s_{1}\right)  =\tilde
{\kappa}_{i}\left(  s_{2}\right)  $ because of the invariance of
$\tilde{\kappa}_{i},$ $i=1,2,\cdots,n-1.$ Every non-null self-similar curve
with the constant invariants $\tilde{\kappa}_{1}=0,$ $\tilde{\kappa}_{i}>0$
and $\tilde{\kappa}_{n-1}\neq0$ ,$i=2,3,\cdots,n-2,$ has the constant
Euclidean curvatures
\[
\kappa_{1}>0,\text{ }\kappa_{2}=\tilde{\kappa}_{2}\kappa_{1}>0,\cdots
,\kappa_{n-2}=\tilde{\kappa}_{n-2}\kappa_{1}>0,\text{ }\kappa_{n-1}%
=\tilde{\kappa}_{n-1}\kappa_{1}\neq0.
\]

\qquad We will investigate the non-null space curve with the constant p-shape
curvatures
\begin{equation}
\tilde{\kappa}_{1}\neq0,\tilde{\kappa}_{2}\neq0,\cdots,\tilde{\kappa}%
_{n-1}\neq0. \label{f6}%
\end{equation}
Consider the constant matrix
\[
M=%
\begin{bmatrix}
0 & \varepsilon_{2} & 0 & 0 & \cdots & 0 & 0 & 0\\
-\varepsilon_{1} & 0 & \varepsilon_{3}\tilde{\kappa}_{2} & 0 & \cdots & 0 &
0 & 0\\
0 & -\varepsilon_{2}\tilde{\kappa}_{2} & 0 & \varepsilon_{4}\tilde{\kappa}_{3}
& \cdots & 0 & 0 & 0\\
0 & 0 & -\varepsilon_{3}\tilde{\kappa}_{3} & 0 & \ddots & 0 & 0 & 0\\
\vdots & \vdots & \vdots & \ddots & \ddots & \vdots & \vdots & \vdots\\
0 & 0 & 0 & 0 & \cdots & -\varepsilon_{n-2}\tilde{\kappa}_{n-2} & 0 &
\varepsilon_{n}\tilde{\kappa}_{n-1}\\
0 & 0 & 0 & 0 & \cdots & 0 & -\varepsilon_{n-1}\tilde{\kappa}_{n-1} & 0
\end{bmatrix}
.
\]
The matrix $M^{2}$ is a semi skew-symmetrix matrix and has exactly
$k=\left\lfloor \frac{n}{2}\right\rfloor $ eigenvalues with multiplicity two:
$\lambda_{1}^{2},$ $\lambda_{2}^{2},\cdots,\lambda_{k}^{2}.$ According to
Corollary $3.3$ in \cite{normal}, the normal form of the matrix $M$ is either
\[%
\begin{bmatrix}
0 & \varepsilon_{2}\lambda_{1} & 0 & 0 & \cdots & 0 & 0\\
-\varepsilon_{1}\lambda_{1} & 0 & 0 & 0 & \cdots & 0 & 0\\
0 & 0 & 0 & \varepsilon_{4}\lambda_{2} & \cdots & 0 & 0\\
0 & 0 & -\varepsilon_{3}\lambda_{2} & 0 & \ddots & 0 & 0\\
\vdots & \vdots & \vdots & \ddots & \ddots & \vdots & \vdots\\
0 & 0 & 0 & 0 & \cdots & 0 & \varepsilon_{n}\lambda_{k}\\
0 & 0 & 0 & 0 & \cdots & -\varepsilon_{n-1}\lambda_{k} & 0
\end{bmatrix}
\]
in the case of even $n$, or the same matrix with an additional row (column) of
zeros in the case of odd $n$.

\subsection{Non-null Self-similar Curves in Even-dimensional Minkowski Space}

\qquad Any non-null self-similar curve in $\mathbb{E}_{1}^{2k}$ can be
described by its constant p-shape curvatures with respect to Theorem
$\ref{varl}.$ Let's see this via the following theorem.

\begin{theorem}
\label{cift self}Let $\alpha:I\rightarrow\mathbb{E}_{1}^{2k}$ be a non-null
self-similar curve with the constant p-shape curvatures $\tilde{\kappa}%
_{1}\neq0,$ $\tilde{\kappa}_{2}\neq0,\cdots,\tilde{\kappa}_{2k-1}\neq0.$
Suppose that $\lambda_{1}^{2},$ $\lambda_{2}^{2},\cdots,\lambda_{k}^{2}$ are
all different eigenvalues of the symmetric matrix $M^{2}.$ So,

$\mathbf{i})$ If the unit vector $\mathbf{e}_{1}$ is a timelike vector, then a
parametric statement of the timelike self-similar curve $\alpha$ according to
arc-length parameter $\sigma$ can be written in the form
\begin{align}
\alpha\left(  \sigma\right)   &  =(\frac{a_{1}}{b_{1}}e^{\tilde{\kappa}%
_{1}\sigma}\sinh\theta_{1},\frac{a_{1}}{b_{1}}e^{\tilde{\kappa}_{1}\sigma
}\cosh\theta_{1},\nonumber\\
&  \frac{a_{2}}{b_{2}}e^{\tilde{\kappa}_{1}\sigma}\sin\theta_{2},-\frac{a_{2}%
}{b_{2}}e^{\tilde{\kappa}_{1}\sigma}\cos\theta_{2},\cdots,\frac{a_{k}}{b_{k}%
}e^{\tilde{\kappa}_{1}\sigma}\sin\theta_{k},-\frac{a_{k}}{b_{k}}%
e^{\tilde{\kappa}_{1}\sigma}\cos\theta_{k}) \label{eq}%
\end{align}
where
\[
b_{1}=\sqrt{\lambda_{1}^{2}-\tilde{\kappa}_{1}^{2}},\text{ \ \ \ \ \ \ }%
\theta_{1}=\lambda_{1}\sigma-\cosh^{-1}\left(  \frac{\lambda_{1}}%
{\sqrt{\lambda_{1}^{2}-\tilde{\kappa}_{1}^{2}}}\right)  ,
\]
and for $i=2,\cdots,k$
\[
b_{i}=\sqrt{\lambda_{i}^{2}+\tilde{\kappa}_{1}^{2}},\text{ \ \ \ \ \ \ }%
\theta_{i}=\lambda_{i}\sigma+\cos^{-1}\left(  \frac{\lambda_{i}}{\sqrt
{\lambda_{i}^{2}+\tilde{\kappa}_{1}^{2}}}\right)  .
\]
The real different non-zero numbers $a_{1},\cdots,a_{k}$ are a solution of the
system of $k$ algebraic quadratic equations
\[
\mathbf{e}_{1}\cdot\mathbf{e}_{1}=-1\text{ \ \ and\ \ \ }\mathbf{e}_{i}%
\cdot\mathbf{e}_{i}=1\text{ \ \ \ }i=2,...,k,
\]
determined by the vectors
\begin{align}
\mathbf{e}_{1}\left(  \sigma\right)   &  =e^{-\tilde{\kappa}_{1}\sigma}%
\frac{d}{d\sigma}\alpha\left(  \sigma\right) \nonumber\\
\mathbf{e}_{2}\left(  \sigma\right)   &  =\frac{d}{d\sigma}\mathbf{e}%
_{1}\left(  \sigma\right) \nonumber\\
\mathbf{e}_{3}\left(  \sigma\right)   &  =\frac{1}{\tilde{\kappa}_{2}}\left(
-\mathbf{e}_{1}\left(  \sigma\right)  +\frac{d}{d\sigma}\mathbf{e}_{2}\left(
\sigma\right)  \right) \label{f11}\\
\mathbf{e}_{4}\left(  \sigma\right)   &  =\frac{1}{\tilde{\kappa}_{3}}\left(
\tilde{\kappa}_{2}\mathbf{e}_{2}\left(  \sigma\right)  +\frac{d}{d\sigma
}\mathbf{e}_{3}\left(  \sigma\right)  \right) \nonumber\\
&  \vdots\text{ \ \ \ \ \ \ \ \ \ \ \ \ \ \ \ }\vdots\nonumber\\
\mathbf{e}_{k}\left(  \sigma\right)   &  =\frac{1}{\tilde{\kappa}_{k-1}%
}\left(  \tilde{\kappa}_{k-2}\mathbf{e}_{k-2}\left(  \sigma\right)  +\frac
{d}{d\sigma}\mathbf{e}_{k-1}\left(  \sigma\right)  \right)  .\nonumber
\end{align}

$\mathbf{ii})$ If the unit vector $\mathbf{e}_{2}$ is a timelike vector, then
a parametric representation of the spacelike self-similar curve $\alpha$
according to arc-length parameter $\sigma$ can be written in the form
\begin{align}
\alpha\left(  \sigma\right)   &  =(-\frac{a_{1}}{b_{1}}e^{\tilde{\kappa}%
_{1}\sigma}\cosh\theta_{1},-\frac{a_{1}}{b_{1}}e^{\tilde{\kappa}_{1}\sigma
}\sinh\theta_{1},\nonumber\\
&  \frac{a_{2}}{b_{2}}e^{\tilde{\kappa}_{1}\sigma}\sin\theta_{2},-\frac{a_{2}%
}{b_{2}}e^{\tilde{\kappa}_{1}\sigma}\cos\theta_{2},\cdots,\frac{a_{k}}{b_{k}%
}e^{\tilde{\kappa}_{1}\sigma}\sin\theta_{k},-\frac{a_{k}}{b_{k}}%
e^{\tilde{\kappa}_{1}\sigma}\cos\theta_{k}) \label{spa}%
\end{align}
The real different non-zero numbers $a_{1},\cdots,a_{k}$ are a solution of the
system of $k$ algebraic quadratic equations
\[
\mathbf{e}_{2}\cdot\mathbf{e}_{2}=-1\text{ \ \ and\ \ \ }\mathbf{e}_{i}%
\cdot\mathbf{e}_{i}=1\text{ \ \ \ }i=1,3,4,\cdots,k,
\]
determined by the vectors
\begin{align}
\mathbf{e}_{1}\left(  \sigma\right)   &  =e^{-\tilde{\kappa}_{1}\sigma}%
\frac{d}{d\sigma}\alpha\left(  \sigma\right) \nonumber\\
\mathbf{e}_{2}\left(  \sigma\right)   &  =-\frac{d}{d\sigma}\mathbf{e}%
_{1}\left(  \sigma\right) \nonumber\\
\mathbf{e}_{3}\left(  \sigma\right)   &  =\frac{1}{\tilde{\kappa}_{2}}\left(
\mathbf{e}_{1}\left(  \sigma\right)  +\frac{d}{d\sigma}\mathbf{e}_{2}\left(
\sigma\right)  \right) \\
\mathbf{e}_{4}\left(  \sigma\right)   &  =\frac{1}{\tilde{\kappa}_{3}}\left(
-\tilde{\kappa}_{2}\mathbf{e}_{2}\left(  \sigma\right)  +\frac{d}{d\sigma
}\mathbf{e}_{3}\left(  \sigma\right)  \right) \nonumber\\
&  \vdots\text{ \ \ \ \ \ \ \ \ \ \ \ \ \ \ \ }\vdots\nonumber\\
\mathbf{e}_{k}\left(  \sigma\right)   &  =\frac{1}{\tilde{\kappa}_{k-1}%
}\left(  \tilde{\kappa}_{k-2}\mathbf{e}_{k-2}\left(  \sigma\right)  +\frac
{d}{d\sigma}\mathbf{e}_{k-1}\left(  \sigma\right)  \right)  .\nonumber
\end{align}

\end{theorem}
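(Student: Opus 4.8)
The plan is to integrate the linear system $\frac{d\mathbf{W}}{d\sigma}=M\mathbf{W}$ from the Existence Theorem in the self-similar (constant-coefficient) case, where $M$ is the constant matrix displayed just above the theorem. First I would invoke the normal-form result (Corollary 3.3 of \cite{normal}) already quoted in the excerpt: since $M^2$ is semi skew-symmetric with exactly $k$ double eigenvalues $\lambda_1^2,\dots,\lambda_k^2$, there is a pseudo-orthogonal change of basis bringing $M$ to block-diagonal form with $2\times2$ blocks $\bigl(\begin{smallmatrix}0 & \varepsilon_{2i}\lambda_i\\ -\varepsilon_{2i-1}\lambda_i & 0\end{smallmatrix}\bigr)$. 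In the case $\mathbf{i})$ where $\mathbf{e}_1$ is timelike, the first block lives in a Lorentzian plane (signature $(-,+)$), so exponentiating it produces hyperbolic functions $\cosh(\lambda_1\sigma),\sinh(\lambda_1\sigma)$, while the remaining $k-1$ blocks live in spacelike planes and exponentiate to ordinary $\cos(\lambda_i\sigma),\sin(\lambda_i\sigma)$. Thus $\mathbf{e}_1(\sigma)$ is a fixed linear combination of $\sinh(\lambda_1\sigma),\cosh(\lambda_1\sigma),\sin(\lambda_i\sigma),\cos(\lambda_i\sigma)$ with coefficients $c_1,\dots,c_k$ depending on the (so-far free) initial frame.

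Next I would recover $\alpha$ itself from formula $(\ref{d})$: since $\tilde\kappa_1$ is the constant $z_1$, the weight $e^{\int z_1\,d\sigma}$ becomes $e^{\tilde\kappa_1\sigma}$, so $\alpha(\sigma)=x_0+\int_{\sigma_0}^{\sigma}e^{\tilde\kappa_1 u}\,\mathbf{e}_1(u)\,du$. Each coordinate of the integrand is $e^{\tilde\kappa_1 u}$ times a single trigonometric or hyperbolic function, so the integral is elementary. For the hyperbolic pair one gets $\int e^{\tilde\kappa_1 u}\cosh(\lambda_1 u)\,du$ and its sine-analogue, which combine (using the identity $p\cosh x + q\sinh x = \sqrt{|p^2-q^2|}\cosh(x\mp c)$ type manipulation) into a single term $\frac{1}{\sqrt{\lambda_1^2-\tilde\kappa_1^2}}e^{\tilde\kappa_1\sigma}\cosh(\lambda_1\sigma-c)$; matching the phase shift gives exactly $b_1=\sqrt{\lambda_1^2-\tilde\kappa_1^2}$ and $\theta_1=\lambda_1\sigma-\cosh^{-1}(\lambda_1/b_1)$ as in the statement. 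Similarly $\int e^{\tilde\kappa_1 u}\cos(\lambda_i u)\,du$ and its sine-partner collapse to a single term with amplitude $1/\sqrt{\lambda_i^2+\tilde\kappa_1^2}$ and phase $\cos^{-1}(\lambda_i/b_i)$, producing the $b_i,\theta_i$ for $i\ge 2$. Absorbing the integration constant into $x_0$ (i.e. placing the curve so the center is at the origin) yields precisely the closed form $(\ref{eq})$, with the $a_i/b_i$ amplitudes where $a_i$ encode the freedom in the initial frame.

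Then I would verify that the frame $\{\mathbf{e}_1,\dots,\mathbf{e}_n\}$ built from $\alpha$ by the recursions $(\ref{f11})$ — which are nothing but the Gram--Schmidt/Frenet relations read off from the Frenet-Serret equations $(\ref{f})$ rewritten with $\tilde\kappa_i$ and $\sigma$ — is genuinely pseudo-orthonormal with the prescribed causal character. Differentiating $(\ref{eq})$ and multiplying by $e^{-\tilde\kappa_1\sigma}$ gives $\mathbf{e}_1(\sigma)$; imposing $\mathbf{e}_1\cdot\mathbf{e}_1=-1$ and $\mathbf{e}_i\cdot\mathbf{e}_i=1$ for $i=2,\dots,k$ becomes, after expanding the Lorentzian inner product (recall the first two coordinates carry signs $-,+$ and the rest $+$), a system of $k$ quadratic equations in $a_1,\dots,a_k$; this is the system cited in the statement. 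By the Existence/Uniqueness package (Theorem $\ref{tek}$) any solution with distinct nonzero $a_i$ indeed produces a non-null self-similar curve realizing the given constant p-shape curvatures, so the parametrization is both valid and, up to p-similarity, the only one. Part $\mathbf{ii})$ is handled identically, except the timelike direction sits in the second slot: the first $2\times2$ block is then the Lorentzian one with $\mathbf{e}_2$ timelike, which swaps which of $\cosh,\sinh$ multiplies which basis vector and flips a sign, giving the $-\frac{a_1}{b_1}e^{\tilde\kappa_1\sigma}\cosh\theta_1$, $-\frac{a_1}{b_1}e^{\tilde\kappa_1\sigma}\sinh\theta_1$ entries of $(\ref{spa})$; the remaining blocks are unchanged.

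The main obstacle I anticipate is bookkeeping rather than conceptual: correctly tracking the signature-dependent signs $\varepsilon_\ell$ through the normal-form reduction and through the Lorentzian inner-product conditions, so that the hyperbolic block ends up attached to exactly the timelike index and the phase-shift constants come out as the stated $\cosh^{-1}$ and $\cos^{-1}$ expressions with the right branch. A secondary point requiring care is justifying that the algebraic system for $a_1,\dots,a_k$ admits a solution with all $a_i$ distinct and nonzero — i.e. that the constructed curve is a genuine Frenet curve (all intermediate curvatures nonzero) — which follows because $\tilde\kappa_2,\dots,\tilde\kappa_{2k-1}\neq0$ forces the $\lambda_i^2$ to be distinct and the corresponding frame components to be nondegenerate, but this deserves an explicit remark.
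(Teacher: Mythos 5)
Your proposal follows essentially the same route as the paper's proof: solve the constant-coefficient Frenet system via the normal form of $M$ to obtain $\mathbf{e}_{1}(\sigma)$ explicitly (hyperbolic block for the timelike plane, rotation blocks for the spacelike planes), integrate $e^{\tilde{\kappa}_{1}\sigma}\mathbf{e}_{1}$ to recover $\alpha$ (the paper organizes this antiderivative as an integration-by-parts linear system in the coordinate pairs, but the computation is identical to yours), and then impose pseudo-orthonormality of the frame built from the recursions to get the quadratic system for $a_{1},\dots,a_{k}$. The approach and level of rigor match the paper's, so no further comparison is needed.
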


\begin{proof}
$i)$ The systems of unit vectors $\mathbf{e}_{1}\left(  \sigma\right)  ,$
$\mathbf{e}_{2}\left(  \sigma\right)  ,\cdots,$ $\mathbf{e}_{2k-1}\left(
\sigma\right)  ,$ $\mathbf{e}_{2k}\left(  \sigma\right)  $ can be expressed as
a solution of the ordinary differential equations
\[
\frac{d}{d\sigma}\omega=M\omega
\]
where $\omega\left(  \sigma\right)  =\left(  \mathbf{e}_{1}\left(
\sigma\right)  ,\mathbf{e}_{2}\left(  \sigma\right)  ,\cdots,\mathbf{e}%
_{2k}\left(  \sigma\right)  \right)  ^{T}$. Using the normal form of the
matrix $M,$ we have that the unit vector $\mathbf{e}_{1}$ is the equal to
\begin{align*}
\mathbf{e}_{1}\left(  \sigma\right)   &  =(a_{1}\cosh\left(  \lambda_{1}%
\sigma\right)  ,a_{1}\sinh\left(  \lambda_{1}\sigma\right)  ,\\
&  a_{2}\cos\left(  \lambda_{2}\sigma\right)  ,a_{2}\sin\left(  \lambda
_{2}\sigma\right)  ,\cdots,a_{k}\cos\left(  \lambda_{k}\sigma\right)
,a_{k}\sin\left(  \lambda_{k}\sigma\right)  )
\end{align*}
where $a_{i}$'s are a real constants satisfying $-a_{1}^{2}+\sum
\limits_{i=2}^{k}a_{i}^{2}=-1.$

\qquad If $X=\left(  \alpha_{1}\left(  \sigma\right)  ,\alpha_{2}\left(
\sigma\right)  ,\cdots,\alpha_{2k}\left(  \sigma\right)  \right)  $ is
considered as the parametric equation of the timelike curve $\alpha,$ we get
the following equation by $\left(  \ref{4}\right)  $
\begin{equation}
\frac{d}{d\sigma}X=\frac{1}{\kappa_{1}}\mathbf{e}_{1} \label{f10}%
\end{equation}
where we have $\kappa_{1}=e^{-\tilde{\kappa}_{1}\sigma}$ by $\tilde{\kappa
}_{1}=-\dfrac{1}{\kappa_{1}}\dfrac{d\kappa_{1}}{d\sigma}$. It can be obtained
the following equation by $\left(  \ref{f10}\right)  $
\begin{align*}
\alpha_{1}  &  =\frac{a_{1}}{\tilde{\kappa}_{1}}e^{\tilde{\kappa}_{1}\sigma
}\cosh\left(  \lambda_{1}\sigma\right)  -\frac{\lambda_{1}}{\tilde{\kappa}%
_{1}}\alpha_{2}\\
\alpha_{2}  &  =\frac{a_{1}}{\tilde{\kappa}_{1}}e^{\tilde{\kappa}_{1}\sigma
}\sinh\left(  \lambda_{1}\sigma\right)  -\frac{\lambda_{1}}{\tilde{\kappa}%
_{1}}\alpha_{1}%
\end{align*}
and
\begin{align*}
\alpha_{2i-1}  &  =\frac{a_{i}}{\tilde{\kappa}_{1}}e^{\tilde{\kappa}_{1}%
\sigma}\cosh\left(  \lambda_{i}\sigma\right)  +\frac{\lambda_{i}}%
{\tilde{\kappa}_{1}}\alpha_{2i}\\
\alpha_{2i}  &  =\frac{a_{i}}{\tilde{\kappa}_{1}}e^{\tilde{\kappa}_{1}\sigma
}\sinh\left(  \lambda_{i}\sigma\right)  -\frac{\lambda_{i}}{\tilde{\kappa}%
_{1}}\alpha_{2i-1}%
\end{align*}
for $i=2,\cdots,k.$ The solutions of above the linear systems are
\begin{align*}
\alpha_{1}  &  =\frac{a_{1}}{\tilde{\kappa}_{1}^{2}-\lambda_{1}^{2}}%
e^{\tilde{\kappa}_{1}\sigma}\left(  \tilde{\kappa}_{1}\cosh\left(  \lambda
_{1}\sigma\right)  -\lambda_{1}\sinh\left(  \lambda_{1}\sigma\right)  \right)
=\frac{a_{1}}{b_{1}}e^{\tilde{\kappa}_{1}\sigma}\sinh\theta_{1}\\
\alpha_{2}  &  =\frac{a_{1}}{\tilde{\kappa}_{1}^{2}-\lambda_{1}^{2}}%
e^{\tilde{\kappa}_{1}\sigma}\left(  \tilde{\kappa}_{1}\sinh\left(  \lambda
_{1}\sigma\right)  -\lambda_{1}\cosh\left(  \lambda_{1}\sigma\right)  \right)
=\frac{a_{1}}{b_{1}}e^{\tilde{\kappa}_{1}\sigma}\cosh\theta_{1}%
\end{align*}
and%
\begin{align*}
\alpha_{2i-1}  &  =\frac{a_{i}}{\tilde{\kappa}_{1}^{2}+\lambda_{i}^{2}%
}e^{\tilde{\kappa}_{1}\sigma}\left(  \tilde{\kappa}_{1}\cos\left(  \lambda
_{i}\sigma\right)  +\lambda_{i}\sin\left(  \lambda_{i}\sigma\right)  \right)
=\frac{a_{i}}{b_{i}}e^{\tilde{\kappa}_{1}\sigma}\sin\theta_{i}\\
\alpha_{2i}  &  =\frac{a_{1}}{\tilde{\kappa}_{1}^{2}-\lambda_{i}^{2}}%
e^{\tilde{\kappa}_{1}\sigma}\left(  \tilde{\kappa}_{1}\sin\left(  \lambda
_{i}\sigma\right)  -\lambda_{i}\cos\left(  \lambda_{i}\sigma\right)  \right)
=-\frac{a_{i}}{b_{i}}e^{\tilde{\kappa}_{1}\sigma}\cos\theta_{i}.
\end{align*}

\qquad We can get the unit vectors $\left(  \ref{f11}\right)  $ by using the
equations $\left(  \ref{f1}\right)  $ for the timelike self-similar curve
$\alpha.$ Thus, we can write the following equations by means of $\left(
\ref{f11}\right)  $%
\begin{align*}
\mathbf{e}_{1}\cdot\mathbf{e}_{1}  &  =-1\text{ \ }\Rightarrow\text{ \ }%
-a_{1}^{2}+\sum\limits_{i=2}^{k}a_{i}^{2}=-1\\
\mathbf{e}_{2}\cdot\mathbf{e}_{2}  &  =1\text{ \ \ }\Rightarrow\text{
\ \ }\sum\limits_{i=1}^{k}a_{i}^{2}\lambda_{i}^{2}=1\\
\mathbf{e}_{3}\cdot\mathbf{e}_{3}  &  =1\text{ \ \ }\Rightarrow\text{
\ \ }-a_{1}^{2}\left(  1-\lambda_{1}^{2}\right)  ^{2}+\sum\limits_{i=2}%
^{k}\left(  1+\lambda_{i}^{2}\right)  ^{2}a_{i}^{2}=\tilde{\kappa}_{2}^{2}\\
\mathbf{e}_{i}\cdot\mathbf{e}_{i}  &  =1\text{ \ \ }\Rightarrow\text{ \ so on,
}i=4,\cdots,k.
\end{align*}

$ii)$ The proof is similar to the proof of $i).$
\end{proof}

\begin{corollary}
$\mathbf{i})$ Let $\alpha:I\rightarrow\mathbb{E}_{1}^{2k}$ $(k>1)$ be a
timelike self-similar curve with a parametric representation $\left(
\ref{eq}\right)  .$ Then, this curve lies on the quadratic timelike
hypersurface with an equation%
\[
\frac{b_{1}^{2}}{a_{1}^{2}}\left(  -x_{1}^{2}+x_{2}^{2}\right)  +\frac
{b_{2}^{2}}{a_{2}^{2}}\left(  x_{1}^{2}+x_{2}^{2}\right)  +\cdots
+\frac{b_{k-1}^{2}}{a_{k-1}^{2}}\left(  x_{2k-3}^{2}+x_{2k-2}^{2}\right)
=(k-1)\frac{b_{k}^{2}}{a_{k}^{2}}\left(  x_{2k-1}^{2}+x_{2k}^{2}\right)  .
\]

$\mathbf{ii})$ Let $\alpha:I\rightarrow\mathbb{E}_{1}^{2k}$ $(k>1)$ be a
spacelike self-similar curve with a parametric representation $\left(
\ref{spa}\right)  .$ Then, this curve lies on the quadratic Lorentzian
hypersurface with an equation%
\[
\frac{b_{1}^{2}}{a_{1}^{2}}\left(  x_{1}^{2}-x_{2}^{2}\right)  +\frac
{b_{2}^{2}}{a_{2}^{2}}\left(  x_{1}^{2}+x_{2}^{2}\right)  +\cdots
+\frac{b_{k-1}^{2}}{a_{k-1}^{2}}\left(  x_{2k-3}^{2}+x_{2k-2}^{2}\right)
=(k-1)\frac{b_{k}^{2}}{a_{k}^{2}}\left(  x_{2k-1}^{2}+x_{2k}^{2}\right)  .
\]

\end{corollary}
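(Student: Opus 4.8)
The plan is to eliminate the spherical arc-length parameter $\sigma$ from the parametric representations $(\ref{eq})$ and $(\ref{spa})$ by exploiting the block structure of $\alpha$. I group the $2k$ coordinate functions into the $k$ consecutive pairs attached to the eigenvalues $\lambda_1^2,\dots,\lambda_k^2$ of $M^2$: the hyperbolic pair $(x_1,x_2)$ and the circular pairs $(x_{2i-1},x_{2i})$ for $i=2,\dots,k$. In case $\mathbf{i})$, $(\ref{eq})$ gives $x_1=\frac{a_1}{b_1}e^{\tilde{\kappa}_1\sigma}\sinh\theta_1$ and $x_2=\frac{a_1}{b_1}e^{\tilde{\kappa}_1\sigma}\cosh\theta_1$, so $\cosh^2\theta_1-\sinh^2\theta_1=1$ yields $-x_1^2+x_2^2=\frac{a_1^2}{b_1^2}e^{2\tilde{\kappa}_1\sigma}$; likewise $x_{2i-1}=\frac{a_i}{b_i}e^{\tilde{\kappa}_1\sigma}\sin\theta_i$, $x_{2i}=-\frac{a_i}{b_i}e^{\tilde{\kappa}_1\sigma}\cos\theta_i$ and $\sin^2\theta_i+\cos^2\theta_i=1$ give $x_{2i-1}^2+x_{2i}^2=\frac{a_i^2}{b_i^2}e^{2\tilde{\kappa}_1\sigma}$ for $2\le i\le k$. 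The reason the phase shifts $\cosh^{-1}(\cdot)$ and $\cos^{-1}(\cdot)$ buried in $\theta_1,\dots,\theta_k$ are harmless is precisely that these are Pythagorean/hyperbolic identities, valid for every value of the argument; the only feature of $b_1,\dots,b_k$ actually used is that they are the prefactors normalizing each block to the common value $e^{2\tilde{\kappa}_1\sigma}$.

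Consequently, along $\alpha$,
\[
\frac{b_1^2}{a_1^2}\left(-x_1^2+x_2^2\right)=\frac{b_2^2}{a_2^2}\left(x_3^2+x_4^2\right)=\cdots=\frac{b_k^2}{a_k^2}\left(x_{2k-1}^2+x_{2k}^2\right)=e^{2\tilde{\kappa}_1\sigma},
\]
so the $k$ normalized blocks are mutually equal identically on the curve. Adding the first $k-1$ of these identities and replacing each summand by the $k$-th one yields the relation asserted in the corollary, which now carries no reference to $\sigma$; hence $\alpha$ lies on that quadric. (Equating any two blocks already gives a polynomial identity satisfied by $\alpha$; the displayed equation is just the symmetric combination.) For case $\mathbf{ii})$ nothing changes except that $(\ref{spa})$ gives $x_1=-\frac{a_1}{b_1}e^{\tilde{\kappa}_1\sigma}\cosh\theta_1$, $x_2=-\frac{a_1}{b_1}e^{\tilde{\kappa}_1\sigma}\sinh\theta_1$, so the hyperbolic identity produces $x_1^2-x_2^2=\frac{a_1^2}{b_1^2}e^{2\tilde{\kappa}_1\sigma}$ instead, and the argument is repeated word for word with $-x_1^2+x_2^2$ replaced by $x_1^2-x_2^2$.

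The remaining point is to justify the causal-type label on the quadric. Writing the relation as $Q(x_1,\dots,x_{2k})=0$, the signature of $Q$ is read off from the signs of its coefficients — in particular the timelike coordinate $x_1$ enters with a negative coefficient in case $\mathbf{i})$ and with a positive one in case $\mathbf{ii})$ — and comparing this with the ambient Lorentzian signature identifies the induced causal character of $\{Q=0\}$; along $\alpha$ one can also evaluate the Lorentzian gradient of $Q$ directly from the block identities above as a cross-check. This signature bookkeeping is the only step that is not essentially a one-line substitution, so if there is a main obstacle at all it is here; the core of the corollary is the collapse of each coordinate block to the single function $e^{2\tilde{\kappa}_1\sigma}$ via the two elementary identities, after which the elimination of $\sigma$ is automatic.
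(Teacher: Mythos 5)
Your proof is correct, and in fact the paper states this corollary with no proof at all, so there is nothing to compare it against; your elimination argument (each coordinate block collapses to $e^{2\tilde{\kappa}_{1}\sigma}$ via $\cosh^{2}\theta_{1}-\sinh^{2}\theta_{1}=1$ and $\sin^{2}\theta_{i}+\cos^{2}\theta_{i}=1$, after which the $k$ normalized blocks are identically equal along the curve) is plainly the intended one. Two small remarks. First, you have silently corrected what appears to be a typo in the printed statement: the second term should read $\frac{b_{2}^{2}}{a_{2}^{2}}\left(x_{3}^{2}+x_{4}^{2}\right)$ rather than $\frac{b_{2}^{2}}{a_{2}^{2}}\left(x_{1}^{2}+x_{2}^{2}\right)$, as your chain of equalities makes clear; it would be worth saying so explicitly. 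Second, your closing paragraph on verifying that the quadric is a timelike (resp.\ Lorentzian) hypersurface is only a sketch --- one must actually check that the Lorentzian gradient of $Q$ is spacelike on the locus $Q=0$ (or at least along $\alpha$), which the signs of the coefficients alone do not settle --- but since the paper offers no justification for that label either, this is a defect of the original statement rather than of your argument.
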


\qquad Now, we examine the following examples of the non-null self-similar
curves in the Minkowski plane and Minkowski space-time.

\textbf{Case }$n=2$ (Lorentzian plane)$:$ Let $\alpha_{2}:I\rightarrow
\mathbb{E}_{1}^{2}$ be a timelike self-similar curve with the constant
invariant $\tilde{\kappa}_{1}.$ Then, we can write $\lambda_{1}^{2}=1$ and
$a_{1}^{2}\lambda_{1}^{2}=1.$ Hence, a parametrization of $\alpha_{2}$ is
\[
\alpha_{2}\left(  \sigma\right)  =\left(  \frac{1}{\sqrt{1-\tilde{\kappa}%
_{1}^{2}}}e^{\tilde{\kappa}_{1}\sigma}\sinh\theta,\frac{1}{\sqrt
{1-\tilde{\kappa}_{1}^{2}}}e^{\tilde{\kappa}_{1}\sigma}\cosh\theta\right)
\]
where $\theta=\sigma+\cosh^{-1}\left(  \frac{1}{\sqrt{1-\tilde{\kappa}_{1}%
^{2}}}\right)  .$

\textbf{Case }$n=4$ (Minkowski space-time)$:$ Let $\alpha_{4}:I\rightarrow
\mathbb{E}_{1}^{4}$ be a timelike self-similar curve with the constant
invariants $\tilde{\kappa}_{1},$ $\tilde{\kappa}_{2}$ and $\tilde{\kappa}%
_{3}.$ Then, the semi-symmetric matrix
\[
M^{2}=%
\begin{bmatrix}
1 & 0 & \tilde{\kappa}_{2} & 0\\
0 & 1-\tilde{\kappa}_{2}^{2} & 0 & \tilde{\kappa}_{2}\tilde{\kappa}_{3}\\
-\tilde{\kappa}_{2} & 0 & -\tilde{\kappa}_{2}^{2}-\tilde{\kappa}_{3}^{2} & 0\\
0 & \tilde{\kappa}_{2}\tilde{\kappa}_{3} & 0 & -\tilde{\kappa}_{3}^{2}%
\end{bmatrix}
\]
has two eigenvalues of multiplicity 2
\[
\lambda_{i}^{2}=\frac{1}{2}\left(  1-\tilde{\kappa}_{2}^{2}-\tilde{\kappa}%
_{3}^{2}+\left(  -1\right)  ^{i}\sqrt{\left(  1-\tilde{\kappa}_{2}^{2}%
-\tilde{\kappa}_{3}^{2}\right)  ^{2}+4\tilde{\kappa}_{3}^{2}}\right)
\]
for $i=1,2.$ The solution of system of quadratic equations
\begin{align*}
-a_{1}^{2}+a_{2}^{2}  &  =-1\\
a_{1}^{2}\lambda_{1}^{2}+a_{2}^{2}\lambda_{2}^{2}  &  =1
\end{align*}
is given by
\[
a_{1}=\sqrt{\frac{1+\lambda_{2}^{2}}{\lambda_{1}^{2}+\lambda_{2}^{2}}},\text{
\ \ \ \ \ }a_{2}=\sqrt{\frac{1-\lambda_{1}^{2}}{\lambda_{1}^{2}+\lambda
_{2}^{2}}}%
\]
since we have $1-\lambda_{1}^{2}>0.$ Consequently, the spherical arc-length
parametrization of the timelike self-similar curve is $\alpha_{4}$\ given by
\[
\alpha_{4}\left(  \sigma\right)  =\left(  \frac{a_{1}}{b_{1}}e^{\tilde{\kappa
}_{1}\sigma}\sinh\theta,\frac{a_{1}}{b_{1}}e^{\tilde{\kappa}_{1}\sigma}%
\cosh\theta,\frac{a_{2}}{b_{2}}e^{\tilde{\kappa}_{1}\sigma}\sin\theta
,-\frac{a_{2}}{b_{2}}e^{\tilde{\kappa}_{1}\sigma}\cos\theta\right)
\]
where $b_{1}=\sqrt{\lambda_{1}^{2}-\tilde{\kappa}_{1}^{2}},$ $b_{2}%
=\sqrt{\lambda_{2}^{2}+\tilde{\kappa}_{1}^{2}}$ and $\theta_{1}=\lambda
_{1}\sigma-\cosh^{-1}\left(  \frac{\lambda_{1}}{\sqrt{\lambda_{1}^{2}%
-\tilde{\kappa}_{1}^{2}}}\right)  ,$ $\theta_{2}=\lambda_{2}\sigma+\cos
^{-1}\left(  \frac{\lambda_{2}}{\sqrt{\lambda_{2}^{2}+\tilde{\kappa}_{1}^{2}}%
}\right)  .$

\subsection{Non-null Self-similar Curves in Odd-dimensional Min\-kowski Space}

\qquad It can be given non-null self-similar curves in $\mathbb{E}_{1}^{2k+1}$
with the following theorem.

\begin{theorem}
\label{tek self}Let $\alpha:I\rightarrow\mathbb{E}_{1}^{2k+1}$ be a non-null
self-similar curve with constant p-shape curvatures $\tilde{\kappa}_{1}\neq0,$
$\tilde{\kappa}_{2}\neq0,\cdots,\tilde{\kappa}_{2k}\neq0.$ Suppose that
$\lambda_{1}^{2},$ $\lambda_{2}^{2},\cdots,\lambda_{k}^{2}$ are all different
eigenvalues of the symmetric matrix $M^{2}.$ So,

$\mathbf{i})$ If the unit vector $\mathbf{e}_{1}$ is the timelike vector, then
a parametric statement of the timelike self-similar curve $\alpha$ according
to the arc-length parameter $\sigma$ can be written in the form
\begin{align}
\alpha\left(  \sigma\right)   &  =(\frac{a_{1}}{b_{1}}e^{\tilde{\kappa}%
_{1}\sigma}\sinh\theta_{1},\frac{a_{1}}{b_{1}}e^{\tilde{\kappa}_{1}\sigma
}\cosh\theta_{1},\nonumber\\
&  \frac{a_{2}}{b_{2}}e^{\tilde{\kappa}_{1}\sigma}\sin\theta_{2},-\frac{a_{2}%
}{b_{2}}e^{\tilde{\kappa}_{1}\sigma}\cos\theta_{2},\cdots,\frac{a_{k}}{b_{k}%
}e^{\tilde{\kappa}_{1}\sigma}\sin\theta_{k},-\frac{a_{k}}{b_{k}}%
e^{\tilde{\kappa}_{1}\sigma}\cos\theta_{k},a_{k+1}e^{\tilde{\kappa}_{1}\sigma
})
\end{align}
where
\[
b_{1}=\sqrt{\lambda_{1}^{2}-\tilde{\kappa}_{1}^{2}},\text{ \ \ \ \ \ \ }%
\theta_{1}=\lambda_{1}\sigma-\cosh^{-1}\left(  \frac{\lambda_{1}}%
{\sqrt{\lambda_{1}^{2}-\tilde{\kappa}_{1}^{2}}}\right)  ,
\]
and for $i=2,\cdots,k$
\[
b_{i}=\sqrt{\lambda_{i}^{2}+\tilde{\kappa}_{1}^{2}},\text{ \ \ \ \ \ \ }%
\theta_{i}=\lambda_{i}\sigma+\cos^{-1}\left(  \frac{\lambda_{i}}{\sqrt
{\lambda_{i}^{2}+\tilde{\kappa}_{1}^{2}}}\right)  .
\]
The real different non-zero numbers $a_{1},\cdots,$ $a_{k},$ $a_{k+1}$ are a
solution of the system of $k+1$ algebraic quadratic equations
\begin{align*}
\mathbf{e}_{1}\cdot\mathbf{e}_{1}  &  =-1\text{ \ \ \ \ or \ \ \ \ }-a_{1}%
^{2}+\sum\limits_{i=2}^{k}a_{i}^{2}+\tilde{\kappa}_{1}^{2}a_{k+1}^{2}=-1\\
\mathbf{e}_{2}\cdot\mathbf{e}_{2}  &  =1\text{ \ \ \ \ \ \ or \ \ \ \ }%
\sum\limits_{i=1}^{k}a_{i}^{2}\lambda_{i}^{2}=1\\
\mathbf{e}_{3}\cdot\mathbf{e}_{3}  &  =1\text{ \ \ \ \ \ \ or \ \ \ \ }%
-a_{1}^{2}\left(  1-\lambda_{1}^{2}\right)  ^{2}+\sum\limits_{i=2}^{k}\left(
1+\lambda_{i}^{2}\right)  ^{2}a_{i}^{2}+\tilde{\kappa}_{1}^{2}a_{k+1}%
^{2}=\tilde{\kappa}_{2}^{2}\\
\mathbf{e}_{i}\cdot\mathbf{e}_{i}  &  =1,\text{ \ \ \ \ \ }i=4,\cdots,k+1
\end{align*}
determined by the vectors
\begin{align*}
\mathbf{e}_{1}\left(  \sigma\right)   &  =e^{-\tilde{\kappa}_{1}\sigma}%
\frac{d}{d\sigma}\alpha\left(  \sigma\right) \\
\mathbf{e}_{2}\left(  \sigma\right)   &  =\frac{d}{d\sigma}\mathbf{e}%
_{1}\left(  \sigma\right) \\
\mathbf{e}_{3}\left(  \sigma\right)   &  =\frac{1}{\tilde{\kappa}_{2}}\left(
-\mathbf{e}_{1}\left(  \sigma\right)  +\frac{d}{d\sigma}\mathbf{e}_{2}\left(
\sigma\right)  \right) \\
\mathbf{e}_{4}\left(  \sigma\right)   &  =\frac{1}{\tilde{\kappa}_{3}}\left(
\tilde{\kappa}_{2}\mathbf{e}_{2}\left(  \sigma\right)  +\frac{d}{d\sigma
}\mathbf{e}_{3}\left(  \sigma\right)  \right) \\
&  \vdots\text{ \ \ \ \ \ \ \ \ \ \ \ \ \ \ \ }\vdots\\
\mathbf{e}_{k+1}\left(  \sigma\right)   &  =\frac{1}{\tilde{\kappa}_{k}%
}\left(  \tilde{\kappa}_{k-1}\mathbf{e}_{k-1}\left(  \sigma\right)  +\frac
{d}{d\sigma}\mathbf{e}_{k}\left(  \sigma\right)  \right)  .
\end{align*}

$\mathbf{ii})$ If the unit vector $\mathbf{e}_{2}$ is timelike vector, then a
parametric representation of the spacelike self-similar curve $\alpha$
according to arc-length parameter $\sigma$ can be written in the form
\begin{align}
\alpha\left(  \sigma\right)   &  =(-\frac{a_{1}}{b_{1}}e^{\tilde{\kappa}%
_{1}\sigma}\cosh\theta_{1},-\frac{a_{1}}{b_{1}}e^{\tilde{\kappa}_{1}\sigma
}\sinh\theta_{1},\nonumber\\
&  \frac{a_{2}}{b_{2}}e^{\tilde{\kappa}_{1}\sigma}\sin\theta_{2},-\frac{a_{2}%
}{b_{2}}e^{\tilde{\kappa}_{1}\sigma}\cos\theta_{2},\cdots,\frac{a_{k}}{b_{k}%
}e^{\tilde{\kappa}_{1}\sigma}\sin\theta_{k},-\frac{a_{k}}{b_{k}}%
e^{\tilde{\kappa}_{1}\sigma}\cos\theta_{k},a_{k+1}e^{\tilde{\kappa}_{1}\sigma
})
\end{align}
The real different non-zero numbers $a_{1},\cdots,$ $a_{k},$ $a_{k+1}$ are a
solution of the system of $k+1$ algebraic quadratic equations
\begin{align*}
\mathbf{e}_{1}\cdot\mathbf{e}_{1}  &  =1\text{ \ \ \ \ or \ \ \ \ }%
\sum\limits_{i=1}^{k}a_{i}^{2}+\tilde{\kappa}_{1}^{2}a_{k+1}^{2}=1\\
\mathbf{e}_{2}\cdot\mathbf{e}_{2}  &  =-1\text{ \ \ or \ \ \ \ }-a_{1}%
^{2}+\sum\limits_{i=2}^{k}a_{i}^{2}\lambda_{i}^{2}=-1\\
\mathbf{e}_{3}\cdot\mathbf{e}_{3}  &  =1\text{ \ \ \ \ or \ \ \ \ }a_{1}%
^{2}\left(  1-\lambda_{1}^{2}\right)  ^{2}+\sum\limits_{i=2}^{k}\left(
1+\lambda_{i}^{2}\right)  a_{i}^{2}+\tilde{\kappa}_{1}^{2}a_{k+1}^{2}%
=\tilde{\kappa}_{2}^{2}\\
\mathbf{e}_{i}\cdot\mathbf{e}_{i}  &  =1,\text{ \ \ \ \ \ \ \ \ \ \ \ }%
i=4,\cdots,k+1,
\end{align*}
determined by the vectors
\begin{align*}
\mathbf{e}_{1}\left(  \sigma\right)   &  =e^{-\tilde{\kappa}_{1}\sigma}%
\frac{d}{d\sigma}\alpha\left(  \sigma\right) \\
\mathbf{e}_{2}\left(  \sigma\right)   &  =-\frac{d}{d\sigma}\mathbf{e}%
_{1}\left(  \sigma\right) \\
\mathbf{e}_{3}\left(  \sigma\right)   &  =\frac{1}{\tilde{\kappa}_{2}}\left(
\mathbf{e}_{1}\left(  \sigma\right)  +\frac{d}{d\sigma}\mathbf{e}_{2}\left(
\sigma\right)  \right) \\
\mathbf{e}_{4}\left(  \sigma\right)   &  =\frac{1}{\tilde{\kappa}_{3}}\left(
-\tilde{\kappa}_{2}\mathbf{e}_{2}\left(  \sigma\right)  +\frac{d}{d\sigma
}\mathbf{e}_{3}\left(  \sigma\right)  \right) \\
&  \vdots\text{ \ \ \ \ \ \ \ \ \ \ \ \ \ \ \ }\vdots\\
\mathbf{e}_{k}\left(  \sigma\right)   &  =\frac{1}{\tilde{\kappa}_{k-1}%
}\left(  \tilde{\kappa}_{k-2}\mathbf{e}_{k-2}\left(  \sigma\right)  +\frac
{d}{d\sigma}\mathbf{e}_{k-1}\left(  \sigma\right)  \right)  .
\end{align*}

\end{theorem}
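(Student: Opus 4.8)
The plan is to run the same argument as in Theorem \ref{cift self}, the only new ingredient being the $1\times1$ zero block that Corollary $3.3$ of \cite{normal} attaches to the normal form of $M$ in the odd-dimensional case. First I would note that the constancy of the $\tilde{\kappa}_{i}$ makes the pseudo-orthonormal moving frame $\omega(\sigma)=(\mathbf{e}_{1}(\sigma),\dots,\mathbf{e}_{2k+1}(\sigma))^{T}$ a solution of the constant-coefficient linear system $\frac{d\omega}{d\sigma}=M\omega$, and that integrating $\tilde{\kappa}_{1}=-\frac{1}{\kappa_{1}}\frac{d\kappa_{1}}{d\sigma}$ (normalising the origin of $\sigma$) gives $\kappa_{1}(\sigma)=e^{-\tilde{\kappa}_{1}\sigma}$, whence by $(\ref{4})$ one has $\frac{d\alpha}{d\sigma}=\frac{1}{\kappa_{1}}\mathbf{e}_{1}=e^{\tilde{\kappa}_{1}\sigma}\mathbf{e}_{1}$.

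Next I would pass to the coordinates in which $M$ takes its normal form: one boost $2\times2$ block $\begin{bmatrix}0 & \varepsilon_{2}\lambda_{1}\\ -\varepsilon_{1}\lambda_{1} & 0\end{bmatrix}$ carrying the timelike direction, $k-1$ rotation blocks $\begin{bmatrix}0 & \varepsilon_{2i}\lambda_{i}\\ -\varepsilon_{2i-1}\lambda_{i} & 0\end{bmatrix}$, and a single $1\times1$ zero block, the numbers $\lambda_{1}^{2},\dots,\lambda_{k}^{2}$ being the eigenvalues of $M^{2}$ of multiplicity two and $0$ the remaining simple one. Solving $\frac{d\omega}{d\sigma}=M\omega$ block by block then yields
\[
\mathbf{e}_{1}(\sigma)=(a_{1}\cosh\lambda_{1}\sigma,\ a_{1}\sinh\lambda_{1}\sigma,\ a_{2}\cos\lambda_{2}\sigma,\ a_{2}\sin\lambda_{2}\sigma,\ \dots,\ a_{k}\cos\lambda_{k}\sigma,\ a_{k}\sin\lambda_{k}\sigma,\ a_{k+1}),
\]
the new feature compared with Theorem \ref{cift self} being that the component produced by the zero block is simply the constant $a_{k+1}$. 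For part ii) the timelike vector is $\mathbf{e}_{2}$ rather than $\mathbf{e}_{1}$, which only interchanges the roles of $\cosh$ and $\sinh$ in the boost block and produces the sign changes displayed in the statement.

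Then I would integrate $\frac{d\alpha}{d\sigma}=e^{\tilde{\kappa}_{1}\sigma}\mathbf{e}_{1}(\sigma)$ componentwise. For the hyperbolic pair one solves the $2\times2$ linear system coming from $\int e^{\tilde{\kappa}_{1}\sigma}\cosh(\lambda_{1}\sigma)\,d\sigma$ and $\int e^{\tilde{\kappa}_{1}\sigma}\sinh(\lambda_{1}\sigma)\,d\sigma$ exactly as in the even-dimensional proof, obtaining $\frac{a_{1}}{b_{1}}e^{\tilde{\kappa}_{1}\sigma}\sinh\theta_{1}$ and $\frac{a_{1}}{b_{1}}e^{\tilde{\kappa}_{1}\sigma}\cosh\theta_{1}$ with $b_{1}=\sqrt{\lambda_{1}^{2}-\tilde{\kappa}_{1}^{2}}$ and the indicated phase; each trigonometric pair likewise gives $\pm\frac{a_{i}}{b_{i}}e^{\tilde{\kappa}_{1}\sigma}\sin\theta_{i}$ and $\mp\frac{a_{i}}{b_{i}}e^{\tilde{\kappa}_{1}\sigma}\cos\theta_{i}$ with $b_{i}=\sqrt{\lambda_{i}^{2}+\tilde{\kappa}_{1}^{2}}$; and the constant component integrates to a multiple of $e^{\tilde{\kappa}_{1}\sigma}$, which after absorbing $\tilde{\kappa}_{1}$ into the constant is the last coordinate $a_{k+1}e^{\tilde{\kappa}_{1}\sigma}$. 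This gives the asserted parametrisation. Finally, reading the structure equations $(\ref{f1})$ backwards expresses $\mathbf{e}_{2},\dots,\mathbf{e}_{k+1}$ as the successive $\sigma$-derivative combinations of $\mathbf{e}_{1}$ listed in the theorem, and imposing the pseudo-orthonormality relations $\mathbf{e}_{i}\cdot\mathbf{e}_{i}=\varepsilon_{i}$ produces the stated system of $k+1$ quadratic equations for $a_{1},\dots,a_{k+1}$.

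The part that needs care is the bookkeeping of the causal signs: one must verify that in the odd-dimensional normal form the unique timelike frame direction is absorbed into exactly one hyperbolic block, that this block sits so that $\mathbf{e}_{1}$ (resp. $\mathbf{e}_{2}$) lies in it in case i) (resp. ii)), and that the $1\times1$ zero block — the source of the new coordinate $a_{k+1}e^{\tilde{\kappa}_{1}\sigma}$ — is spacelike, so that the causal-character constraints come out with the signs written in the statement. Everything else is a routine transcription of the even-dimensional argument, and the actual existence of a curve realising any admissible choice of $a_{i}$ is already guaranteed by Theorem \ref{varl}.
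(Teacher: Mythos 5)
Your proposal is correct and follows essentially the same route as the paper, whose own proof of Theorem \ref{tek self} is just the one-line remark that it is identical to the proof of Theorem \ref{cift self}; your write-up is a faithful expansion of that reduction, with the only genuinely new point --- the $1\times1$ zero block in the odd-dimensional normal form contributing the constant component of $\mathbf{e}_{1}$ and hence the coordinate $a_{k+1}e^{\tilde{\kappa}_{1}\sigma}$ after integration against $e^{\tilde{\kappa}_{1}\sigma}$ --- handled exactly as intended.
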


\begin{proof}
The proof is the same as the proof of Theorem $\ref{cift self}.$
\end{proof}

\begin{corollary}
$\mathbf{i})$ Let $\alpha:I\rightarrow\mathbb{E}_{1}^{2k+1}$ $(k>1)$ be a
timelike self-similar curve with a parametric representation $\left(
\ref{eq}\right)  .$ Then, this curve lies on the quadratic timelike
hypersurface with an equation%
\[
\frac{b_{1}^{2}}{a_{1}^{2}}\left(  -x_{1}^{2}+x_{2}^{2}\right)  +\frac
{b_{2}^{2}}{a_{2}^{2}}\left(  x_{1}^{2}+x_{2}^{2}\right)  +\cdots
+\frac{b_{k-1}^{2}}{a_{k-1}^{2}}\left(  x_{2k-3}^{2}+x_{2k-2}^{2}\right)
+\frac{b_{k}^{2}}{a_{k}^{2}}\left(  x_{2k-1}^{2}+x_{2k}^{2}\right)  =\frac
{k}{a_{k+1}^{2}}x_{2k+1}^{2}.
\]

$\mathbf{ii})$ Let $\alpha:I\rightarrow\mathbb{E}_{1}^{2k}$ $(k>1)$ be a
spacelike self-similar curve with a parametric representation $\left(
\ref{spa}\right)  .$ Then, this curve lies on the quadratic Lorentzian
hypersurface with an equation%
\[
\frac{b_{1}^{2}}{a_{1}^{2}}\left(  x_{1}^{2}-x_{2}^{2}\right)  +\frac
{b_{2}^{2}}{a_{2}^{2}}\left(  x_{1}^{2}+x_{2}^{2}\right)  +\cdots
+\frac{b_{k-1}^{2}}{a_{k-1}^{2}}\left(  x_{2k-3}^{2}+x_{2k-2}^{2}\right)
+\frac{b_{k}^{2}}{a_{k}^{2}}\left(  x_{2k-1}^{2}+x_{2k}^{2}\right)  =\frac
{k}{a_{k+1}^{2}}x_{2k+1}^{2}.
\]

\end{corollary}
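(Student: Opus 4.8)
The plan is to eliminate the parameter $\sigma$ from the explicit parametrization furnished by Theorem \ref{tek self}, by forming suitable quadratic combinations of the coordinate functions of $\alpha$. For part $\mathbf{i})$, write the timelike self-similar curve as $\alpha(\sigma)=(x_1(\sigma),\dots,x_{2k+1}(\sigma))$, so that $x_1=\tfrac{a_1}{b_1}e^{\tilde\kappa_1\sigma}\sinh\theta_1$ and $x_2=\tfrac{a_1}{b_1}e^{\tilde\kappa_1\sigma}\cosh\theta_1$, while $x_{2i-1}=\tfrac{a_i}{b_i}e^{\tilde\kappa_1\sigma}\sin\theta_i$ and $x_{2i}=-\tfrac{a_i}{b_i}e^{\tilde\kappa_1\sigma}\cos\theta_i$ for $2\le i\le k$, and finally $x_{2k+1}=a_{k+1}e^{\tilde\kappa_1\sigma}$.

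First I would treat the hyperbolic block: since $\cosh^2\theta_1-\sinh^2\theta_1=1$, one obtains $\tfrac{b_1^2}{a_1^2}\bigl(-x_1^2+x_2^2\bigr)=e^{2\tilde\kappa_1\sigma}$. Next, for each circular block with $2\le i\le k$, the identity $\sin^2\theta_i+\cos^2\theta_i=1$ gives $\tfrac{b_i^2}{a_i^2}\bigl(x_{2i-1}^2+x_{2i}^2\bigr)=e^{2\tilde\kappa_1\sigma}$, the very same value; and likewise $\tfrac{1}{a_{k+1}^2}x_{2k+1}^2=e^{2\tilde\kappa_1\sigma}$. Hence all of these $k+1$ scalar quantities coincide with the single expression $e^{2\tilde\kappa_1\sigma}$, which is precisely the parameter-dependent quantity we wish to remove.

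Adding the first $k$ of these equal quantities yields the left-hand side of the asserted equation, equal to $k\,e^{2\tilde\kappa_1\sigma}$, whereas $k$ times the last one yields $\tfrac{k}{a_{k+1}^2}x_{2k+1}^2=k\,e^{2\tilde\kappa_1\sigma}$; equating the two produces the stated implicit equation, which is therefore satisfied identically along $\alpha$. Part $\mathbf{ii})$ follows verbatim from the parametrization of the spacelike self-similar curve in Theorem \ref{tek self}: the only change is that the timelike direction is now carried by the block $x_1^2-x_2^2$ (with the signs of $x_1,x_2$ interchanged) together with the modified quadratic constraints on $a_1,\dots,a_{k+1}$, which is exactly what turns the ``timelike hypersurface'' of $\mathbf{i})$ into the ``Lorentzian hypersurface'' of $\mathbf{ii})$.

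The trigonometric and hyperbolic bookkeeping above is entirely routine; the one point that deserves a separate check is the qualifier attached to the hypersurface (``timelike'' in $\mathbf{i})$, ``Lorentzian'' in $\mathbf{ii})$). For this I would inspect the signature of the quadratic form appearing in the displayed equation --- equivalently, compute its gradient with respect to the ambient metric $I^\ast$ and determine the causal character of that gradient along $\alpha(\sigma)$ --- the sign pattern of the coefficients $b_i^2/a_i^2$ against the signature $(-,+,\dots,+)$ being the only place where a genuine subtlety enters. I expect this last point, rather than the elimination of $\sigma$, to be the main obstacle.
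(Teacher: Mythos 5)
Your proposal is correct: the paper states this corollary without any proof, and your argument --- observing that each of the $k+1$ quantities $\frac{b_1^2}{a_1^2}\bigl(-x_1^2+x_2^2\bigr)$, $\frac{b_i^2}{a_i^2}\bigl(x_{2i-1}^2+x_{2i}^2\bigr)$ and $\frac{1}{a_{k+1}^2}x_{2k+1}^2$ equals $e^{2\tilde{\kappa}_1\sigma}$ by the hyperbolic and circular Pythagorean identities, then summing the first $k$ and comparing with $k$ times the last --- is exactly the intended elimination of the parameter. The only discrepancies are on the paper's side, not yours: the second term of the displayed quadric should read $x_3^2+x_4^2$ rather than $x_1^2+x_2^2$, and part $\mathbf{ii})$ should refer to $\mathbb{E}_1^{2k+1}$ and the spacelike representation of Theorem \ref{tek self}; your computation handles the corrected statement.
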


\qquad Now, we investigate the timelike self-similar curves in the Minkowski
3-space $\mathbb{E}_{1}^{3}$.

\textbf{Case }$n=3$ (Minkowski 3-space)$:$ Let $\alpha_{3}:I\rightarrow
\mathbb{E}_{1}^{3}$ be a timelike self-similar curve with constant invariant
$\tilde{\kappa}_{1}\neq0$ and $\tilde{\kappa}_{2}\neq0.$ Then, the
semi-symmetric matrix
\[
M^{2}=%
\begin{bmatrix}
1 & 0 & \tilde{\kappa}_{2}\\
0 & 1-\tilde{\kappa}_{2}^{2} & 0\\
-\tilde{\kappa}_{2} & 0 & -\tilde{\kappa}_{2}^{2}%
\end{bmatrix}
\]
has a unique non-zero eigenvalue of multiplicity 2
\[
\lambda_{1}^{2}=1-\tilde{\kappa}_{2}^{2}%
\]
and therefore $b_{1}=\sqrt{1-\tilde{\kappa}_{1}^{2}-\tilde{\kappa}_{2}^{2}}$.
By the Theorem $\ref{tek self},$ we can compute
\[
a_{1}=\sqrt{\frac{1}{1-\tilde{\kappa}_{2}^{2}}},\text{ \ \ \ \ \ \ }%
a_{2}=\sqrt{\frac{\tilde{\kappa}_{2}^{2}}{\tilde{\kappa}_{1}^{2}\left(
1-\tilde{\kappa}_{2}^{2}\right)  }}.
\]
Hence, a parametrization of $\alpha_{3}$ with respect to spherical arc-lentgh
is given by%
\begin{equation}
\alpha_{3}\left(  \sigma\right)  =\left(  \frac{e^{\tilde{\kappa}_{1}\sigma
}\sinh\theta}{\sqrt{\left(  1-\tilde{\kappa}_{2}^{2}\right)  \left(
1-\tilde{\kappa}_{1}^{2}-\tilde{\kappa}_{2}^{2}\right)  }},\frac
{e^{\tilde{\kappa}_{1}\sigma}\cosh\theta}{\sqrt{\left(  1-\tilde{\kappa}%
_{2}^{2}\right)  \left(  1-\tilde{\kappa}_{1}^{2}-\tilde{\kappa}_{2}%
^{2}\right)  }},\sqrt{\frac{\tilde{\kappa}_{2}^{2}}{\tilde{\kappa}_{1}%
^{2}\left(  1-\tilde{\kappa}_{2}^{2}\right)  }}e^{\tilde{\kappa}_{1}\sigma
}\right)  \label{alfa3}%
\end{equation}
where $\theta=\sigma\sqrt{1-\tilde{\kappa}_{2}^{2}}+\cosh^{-1}\sqrt
{\frac{1-\tilde{\kappa}_{1}^{2}}{1-\tilde{\kappa}_{1}^{2}-\tilde{\kappa}%
_{2}^{2}}}.$ It is clear that the timelike self-similar curve $\alpha_{3}$ is
a curve on the surface with an implicit equation $\frac{\tilde{\kappa}_{2}%
^{2}}{\tilde{\kappa}_{1}^{2}}\left(  -x_{1}^{2}+x_{2}^{2}\right)  =\frac
{1}{1-\tilde{\kappa}_{1}^{2}-\tilde{\kappa}_{2}^{2}}x_{3}^{2}$ (see figure 2).%

\begin{figure}
[ptb]
\begin{center}
\includegraphics[
height=3.1631in,
width=4.2in
]%
{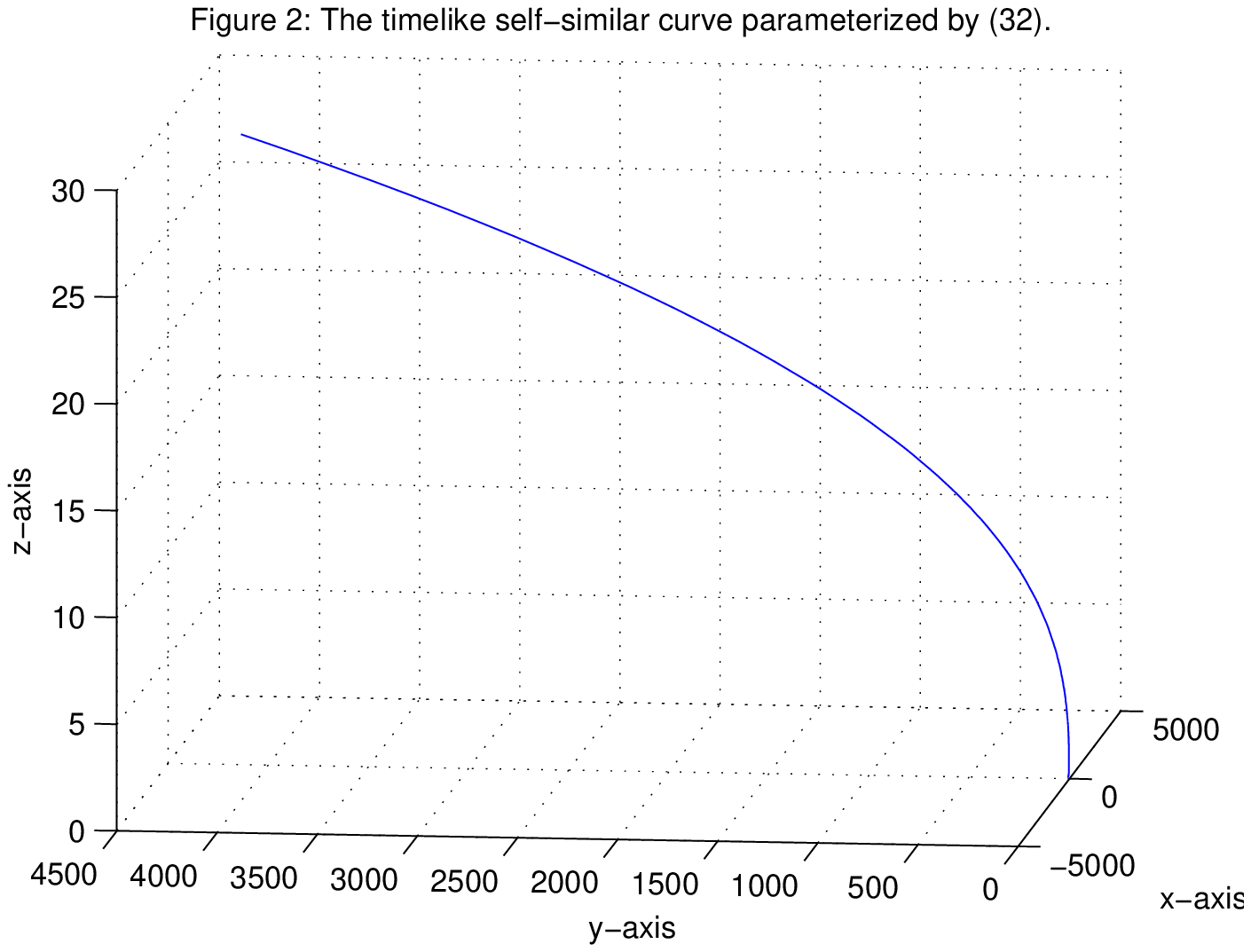}%
\end{center}
\end{figure}

\section{Concluding Remarks}

\qquad In this paper, we gave the p-similarity invariants of a non-null curves
in the Lorentzian n-space. We also proved the fundamental existence and
uniqueness theorems for non-null curves under p-similarity transformation. We
studied self-similar non-null curves in Lorentzian n-space. We think that the
notion of similarity and self-similarity in the Lorentzian-Minkowski space may
form many new ideas and concepts to the pure and applied mathematics.

\qquad Bejancu \cite{null0} represented a method for the general study of the
geometry of null curves in Lorentz manifolds and, more generally, in
semi-Riemannian manifolds (see also \cite{duggal}). A. Ferrandez, A. Gimenez,
and P. Lucas \cite{null1} generalized the Cartan frame to Lorentzian space
forms. They showed the fundamental existence and uniqueness theorems and they
obtained values of the Cartan curvatures in higher dimensions. Therefore, it
is of interest\ to investigate the similarity invariants of null curves in
Lorentzian n-space under these considerations.

\qquad The motions of curves in $\mathbb{E}^{2},$ $\mathbb{E}^{3}$ and
$\mathbb{E}^{n}$ $(n>3)$ yield the mKdV hierarchy, Schr\"{o}dinger hierarchy
and a multi- component generalization of mKdV-Schr\"{o}dinger hierarchies,
respectively. KS. Chou and C. Qu \cite{chaos} showed that the motions of
curves in two-, three- and n-dimensional $(n>3)$ similarity geometries
correspond to the Burgers hierarchy, Burgers-mKdV hierarchy and a
multi-component generalization of these hierarchies by using the similarity
invariants of curves in comparison with its invariants under the Euclidean
motion. Also, they \cite{chaos0} found that many 1+1-dimensional integrable
equations like KdV, Burgers, Sawada-Kotera, Harry-Dym hierarchies and
Camassa-Holm equations arise from motions of plane curves in centro-affine,
similarity, affine and fully affine geometries. The motion of curves on
two-dimensional surfaces in $\mathbb{E}_{1}^{3}$ was considered by G\"{u}rses
\cite{gurses}. Q. Ding and J. Inoguchi \cite{inoguchi} showed binormal motions
of curves in Minkowski 3-space are closely related to Schr\"{o}dinger flows
into the Lorentzian symmetric space and Riemannian symmetric space. Therefore,
with the aid of the current paper, it will be studied the motion of Lorentzian
similar curves with p-similarity invariants under the consideration of the
paper \cite{chaos}.

\bigskip

Hakan Sim\c{s}ek and Mustafa \"{O}zdemir

Department of Mathematics

Akdeniz University

Antalya, TURKEY;

e-mail: hakansimsek@akdeniz.edu.tr.

\ \ \ \ \ \ \ \ \ \ \ mozdemir@akdeniz.edu.tr
\end{document}